\def\NZQ{\mathbb}               
\def\NN{{\NZQ N}}
\def\frk{\mathfrak}               
\def\Phi{{\frk N}}
\def\opn#1#2{\def#1{\operatorname{#2}}} 
\opn\chara{char} \opn\length{\ell} \opn\pd{pd} \opn\rk{rk}
\opn\projdim{proj\,dim} \opn\injdim{inj\,dim} \opn\rank{rank}
\opn\depth{depth} \opn\grade{grade} \opn\height{height}
\opn\embdim{emb\,dim} \opn\codim{codim}
\opn\Tr{Tr} \opn\bigrank{big\,rank}
\opn\superheight{superheight}\opn\lcm{lcm}
\opn\trdeg{tr\,deg}
\opn\reg{reg} \opn\lreg{lreg} \opn\ini{in} \opn\lpd{lpd}
\opn\size{size}\opn{\mult}{mult}
\opn\div{div} \opn\Div{Div} \opn\cl{cl} \opn\Cl{Cl}
\opn\Spec{Spec} \opn\Supp{Supp} \opn\supp{supp} \opn\Sing{Sing}
\opn\Ass{Ass} \opn\Min{Min}
\opn\Ann{Ann} \opn\Rad{Rad} \opn\Soc{Soc}
\opn\Syz{Syz} \opn\Im{Im} \opn\Ker{Ker} \opn\Coker{Coker}
\opn\Am{Am} \opn\Hom{Hom} \opn\Tor{Tor} \opn\Ext{Ext}
\opn\End{End} \opn\Aut{Aut} \opn\id{id} \opn\ini{in}
\opn\nat{nat}
\opn\pff{pf}
\opn\Pf{Pf} \opn\GL{GL} \opn\SL{SL} \opn\mod{mod} \opn\ord{ord}
\opn\Gin{Gin}
\opn\Hilb{Hilb}\opn\adeg{adeg}\opn\std{std}\opn\ip{infpt}
\opn\Pol{Pol}
\opn\sat{sat}
\opn\Var{Var}
\opn\Gen{Gen}
\opn\aff{aff} \opn\con{conv} \opn\relint{relint} \opn\st{st}
\opn\lk{lk} \opn\cn{cn} \opn\core{core} \opn\vol{vol}
\opn\link{link} \opn\star{star}
\opn\gr{gr}
\def\pot#1#2{#1[\kern-0.28ex[#2]\kern-0.28ex]}
\opn\dirlim{\underrightarrow{\lim}}
\opn\inivlim{\underleftarrow{\lim}}
\let\union=\cup
\let\sect=\cap
\let\iso=\cong
\let\to=\rightarrow
\def\Implies{\ifmmode\Longrightarrow \else
        \unskip${}\Longrightarrow{}$\ignorespaces\fi}
\def\implies{\ifmmode\Rightarrow \else
        \unskip${}\Rightarrow{}$\ignorespaces\fi}
\def\iff{\ifmmode\Longleftrightarrow \else
        \unskip${}\Longleftrightarrow{}$\ignorespaces\fi}
\newtheorem{Theorem}{Theorem}[section]
\newtheorem{Lemma}[Theorem]{Lemma}
\newtheorem{Corollary}[Theorem]{Corollary}
\newtheorem{Proposition}[Theorem]{Proposition}
\let\epsilon\varepsilon
\let\phi=\varphi
\let\kappa=\varkappa
\def\qed{\ifhmode\textqed\fi
      \ifmmode\ifinner\quad\qedsymbol\else\dispqed\fi\fi}
\def\textqed{\unskip\nobreak\penalty50
       \hskip2em\hbox{}\nobreak\hfil\qedsymbol
       \parfillskip=0pt \finalhyphendemerits=0}
\def\dispqed{\rlap{\qquad\qedsymbol}}
\opn\dis{dis}
\def\pnt{{\raise0.5mm\hbox{\large\bf.}}}
\opn\Lex{Lex}
\newcommand{\inD}[1][\relax]{\def\argone{#1}\def\temprelax{\relax}
  \ifx\argone\temprelax\right.\else\,\middle|#1\right.{}\fi}
\newif\ifbinary
\begin{document}

\title{Indispensable Hibi relations and Gr\"obner bases}

\author{Ayesha Asloob Qureshi}

\address{Ayesha Asloob Qureshi, Abdus Salam School of Mathematical Sciences,
GC University, Lahore.
68-B, New Muslim Town, Lahore 54600, Pakistan} \email{ayesqi@gmail.com}

\begin{abstract}
In this paper we consider Hibi rings and Rees rings attached to a poset. We classify the ideal lattices of posets whose Hibi relations are indispensable and the ideal lattices of posets whose Hibi relations form a quadratic Gr\"obner basis with respect to the rank lexicographic order. Similar classifications are obtained for Rees rings of Hibi ideals.
\end{abstract}
\subjclass{13C05, 13C13, 13P10}
\keywords{Hibi Rings, Hibi relations, Gr\"obner bases, Lattices}

\maketitle

\section*{Introduction}

The main purpose of this paper is to classify those distributive lattices with the property that the Hibi relations are indispensable and those with the property that Hibi relations form a Gr\"obner basis with respect to the rank lexicographic order. To be precise let $L$ be a finite lattice. Attached to this lattice one defines the so-called Hibi ideal as follows: we fix a field $K$ and consider the polynomial ring $T=K[\{z_a\:\; a\in L\}]$ over $K$ whose variables are indexed by the elements of $L$. Then
\[
I_L=(z_az_b-z_{a\wedge  b}z_{a\vee b}:\; a,b\in L).
\]
is called the {\em Hibi relation ideal} of $L$. Relations of the form $z_az_b-z_{a\wedge  b}z_{a\vee b}$  are called {\em Hibi relations}.

The $K$-algebra
\[
{\mathcal R}_K[L]=T/I_L
\]
is called the {\em  Hibi ring} of $L$ (over $K$).

We order variables in $T=K[\{z_a\:\; a\in L\}]$ such that $z_a < z_b$ if $\rank a < \rank b$ and call any monomial order induced by this ordering the {\em rank order}.

In \cite{H1}, Hibi proved the following fundamental fact which says that the $K$-algebra ${\mathcal R}_K[L]$ is a domain (hence a toric ring) if and only if $L$ is distributive. In fact Hibi showed that for distributive lattice Hibi relations form the reduced Gr\"obner basis with respect to the reverse lexicographic order. Even though Hibi relations generate $I_L$, they may not be indispensable in the sense of Hibi and Ohsugi \cite{HO}. In other words, in general there may exist a minimal set of generators of $I_L$ consisting of relations other than Hibi relations. The simplest example of such a lattice is the Boolean lattice $B_3$ which consists of all the subsets of a three element set.

In Theorem~\ref{hot} we give the classification of finite distributive lattices with the property that for $I_L$ the Hibi relations are indispensable. To describe the result, recall that according to Birkhoff's theorem every finite distributive lattice is isomorphic to the ideal lattice of a finite poset. This poset is uniquely determined by $L$. In fact, it is the subposet $P$ of $L$ consisting of join-irreducible elements of $L$. Among other equivalent conditions for the property that Hibi relations are indispensable, it is shown in Theorem~\ref{hot} that all poset ideals of $P$ are generated by at most 2 elements. Another equivalent condition says that $L$ is a conditionally URC lattice. Modifying the definition of uniquely complemented lattices given by Stanley in \cite{S}, we call a lattice $L$ conditionally uniquely relatively complemented (conditionally URC), if each interval $[a,b]$ in $L$ has unique complements provided they exist. Recall that $c,d \in [a,b]$ are called complements of each other with respect to $[a,b]$ if $c\vee d= b$ and $c \wedge d= a$. In Theorem~\ref{URC}, we observe that a conditionally URC lattice is always distributive. We show in Proposition~\ref{URC} that a URC lattice is isomorphic to a sublattice of $\NN^2$ of the form $[m]_0 \times [n]_0$, where $[k]_0= \{0, 1, \ldots, n\}$.

Motivated by the paper \cite{AHH} of Aramova, Herzog and Hibi where it is shown in \cite[Theorem 2.5]{AHH} that the Hibi ring of a finite simple planar distributive lattice has a quadratic Gr\"obner basis if and only if $L$ is a chain ladder, we classify in Theorem~\ref{defense} all distributive lattices $L$ having the property that the reduced Gr\"obner basis of $I_L$ consists of Hibi relations. One of the equivalent condition states that $L$ is a chain ladder without critical corner.

Let $P=\{p_1, \ldots, p_n\}$ be a finite poset and $L$ be its ideal lattice. In the last section of the paper we study the Gr\"obner basis of the defining ideal $J_L$ of the Rees ring of the Hibi ideal $H_L$. The Hibi ideal $H_L$ is defined to be the monomial ideal generated by the monomials $u_a=\prod_{p_i\in a}x_i\prod_{p_i\not \in a}y_i$ in the polynomial ring $K[x_1,\ldots, x_n, y_1, \ldots, y_n]$. In \cite{HH1}, the Gr\"obner basis of $J_L$ is described with respect to the rank reverse lexicographic order. The main result of Section 4 is Theorem~\ref{Rees} where it is shown that a distributive lattice $L$ is a URC lattice if and only if the reduced Gr\"obner basis with respect to natural lexicographic order consists of Hibi relations and special linear relations. This result is used in Corollary~\ref{meet} to study for meet-distributive meet-semilattice $L$, the reduced Gr\"obner basis of $J_L$ with respect to a lexicographic order.

\section{Hibi rings with indispensable Hibi relations}

In this section we want to classify all distributive lattices $L$ with the property that the Hibi relations $z_az_b-z_{a\wedge  b}z_{a\vee b}$ are indispensable, which means  that the Hibi relations appear in each minimal binomial set of generators of $I_L$. Before discussing this problem we recall some fundamental facts about Hibi rings.

Let $L$ be a finite distributive lattice. According to Birkhof's theorem, the distributive lattice $L$ is isomorphic to the ideal lattice of the subposet $P$ of $L$ consisting of all join irreducible elements of $L$. Thus we may always view $L$ as the ideal lattice $\mathcal{I}(P)$ of a poset $P$. Say, $P=\{p_1,\ldots,p_r\}$, and let $S= K[x_1,\ldots, x_r,y_1,\ldots,y_r]$ be  the polynomial ring in $2r$ indeterminate. For each $a\in L$ we define the monomial
\begin{eqnarray}\label{monomial}
u_a=\prod_{p_i\in a}x_i\prod_{p_i\not \in a}y_i,
\end{eqnarray}
and consider the $K$-algebra homomorphism
\[
\varphi\:\; T\to S,\quad z_a\mapsto u_a.
\]
Then one shows that $\Ker(\varphi)=I_L$, where $I_L=(z_az_b-z_{a\wedge  b}z_{a\vee b}:\; a,b\in L)$. Hence ${\mathcal R}_K[L]\iso K[\{u_a\:\; a\in L\}]$, which implies ${\mathcal R}_K[L]$ is a domain. In fact Hibi showed that the Hibi relations form a reduced Gr\"obner basis of $\Ker(\varphi)$ with respect to reverse rank lexicographic order, see \cite{H1} and \cite[Theorem 10.1.3]{HH2}.

Note that a lattice is distributive if and only if it does not contain one of the following sublattices shown in Figure \ref{Fig1}.

\begin{figure}[h]
\begin{center}
\psset{unit=0.5cm}
\begin{pspicture}(-9,-1)(4,3)
\psline(2,3)(0.5,1.5)
\psline(0.5,1.5)(0.5,0)
\psline(0.5,0)(2,-1)
\psline(2,-1)(3.5,0.75)
\psline(3.5,0.75)(2,3)
\rput(2,3){$\bullet$}
\put(1.9,3.3){$a$}
\rput(0.5,1.5){$\bullet$}
\put(-0.1,1.5){$b$}
\rput(0.5,0){$\bullet$}
\put(-0.1,0){$c$}
\rput(2,-1){$\bullet$}
\put(1.9,-1.6){$e$}
\rput(3.5,0.75){$\bullet$}
\put(3.8,0.75){$d$}
\psline(-9,3)(-11,1)
\psline(-9,3)(-9.5,1)
\psline(-9,3)(-7,1)
\psline(-11,1)(-9,-1)
\psline(-9.5,1)(-9,-1)
\psline(-7,1)(-9,-1)
\rput(-9,3){$\bullet$}
\put(-9.1,3.3){$a$}
\rput(-11,1){$\bullet$}
\put(-11.5,1){$b$}
\rput(-9.5,1){$\bullet$}
\put(-10.1,1){$c$}
\rput(-7,1){$\bullet$}
\put(-6.7,1){$d$}
\rput(-9,-1){$\bullet$}
\put(-9.1,-1.6){$e$}
\end{pspicture}
\end{center}
\caption{}\label{Fig1}
\end{figure}

Assume now that $L$ is not a distributive lattice. Then it contains at least one of the sublattices as shown in Figure \ref{Fig1}. Say, it contains the sublattice on the left, then $z_bz_c-z_az_e, z_bz_d-z_az_e \in I_L$, which implies $z_b(z_c-z_d) \in I_L$, but neither $z_b$ or $z_c-z_d$ belongs to $I_L$. Hence $I_L$ is not a prime ideal in this case. Similarly it can be seen that $I_L$ is not prime if $L$ contains the sublattice on the right.

Distributive lattices are characterized as follows.

\begin{Proposition}
Let $L$ be a lattice. Then the following conditions are equivalent:
\begin{enumerate}
\item[{\em (a)}] L is a distributive lattice.
\item[{\em (b)}] Hibi relations form a Gr\"obner basis with respect to the rank reverse lexicographic order.
\end{enumerate}
\end{Proposition}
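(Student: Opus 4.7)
My plan is to obtain $(a)\Rightarrow(b)$ directly from Hibi's classical theorem, cited in the preceding discussion as \cite{H1} and \cite[Theorem 10.1.3]{HH2}: for every finite distributive lattice the Hibi relations form a reduced Gr\"obner basis of $I_L$ with respect to the rank reverse lexicographic order. All the substance of the proposition thus lies in the converse $(b)\Rightarrow(a)$, which I will prove by contraposition via Buchberger's criterion. Assuming $L$ is not distributive, the well-known structure theorem for non-distributive lattices tells us that $L$ contains one of the two five-element sublattices shown in Figure~\ref{Fig1}; in each case I will exhibit a specific pair of Hibi relations whose $S$-polynomial is a nonzero normal form, contradicting the Gr\"obner basis hypothesis.

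Consider first the diamond on the left of Figure~\ref{Fig1}, with bottom $e$, top $a$, and pairwise incomparable middle elements $b,c,d$ so that $b\wedge c=b\wedge d=c\wedge d=e$ and $b\vee c=b\vee d=c\vee d=a$. I take the Hibi relations
\[
f_1=z_bz_c-z_ez_a,\qquad f_2=z_bz_d-z_ez_a.
\]
In the rank reverse lex order the leading monomials are $z_bz_c$ and $z_bz_d$ respectively, because in each binomial both monomials have the same total rank weight and the reverse lex tie-break prefers the monomial that does not involve the smallest variable $z_e$. Computing directly,
\[
S(f_1,f_2)=z_df_1-z_cf_2=z_az_e(z_c-z_d).
\]
Every pair of variables appearing in $z_az_cz_e$ or in $z_az_dz_e$ corresponds to a comparable pair of elements of $L$ (the chains $e<c<a$ and $e<d<a$), so no Hibi relation can touch either monomial. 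Therefore $S(f_1,f_2)$ is a nonzero normal form, contradicting the Gr\"obner basis assumption.

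The pentagon on the right of Figure~\ref{Fig1} is handled in the same way: with $e<c<b<a$, $e<d<a$ and $d$ incomparable to both $b$ and $c$, one has $b\wedge d=c\wedge d=e$ and $b\vee d=c\vee d=a$, and the $S$-polynomial of $f_1=z_bz_d-z_ez_a$ and $f_2=z_cz_d-z_ez_a$ simplifies to $z_az_e(z_b-z_c)$, which is again a nonzero normal form since $\{a,b,e\}$ and $\{a,c,e\}$ are chains in $L$. The main subtlety I foresee is simply the correct identification of the leading terms in the rank reverse lex order; this is a routine reverse lex comparison, with the small caveat that for the pentagon $L$ is not graded, so one fixes the rank to be the length of the longest chain from the bottom element, which suffices to run the comparison above.
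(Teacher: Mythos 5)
Your argument is correct and takes essentially the same approach as the paper: for $(b)\Rightarrow(a)$ the paper also passes to a forbidden five-element sublattice and exhibits $f=z_az_ez_b-z_az_ez_c\in I_L$ (exactly your pentagon $S$-polynomial $z_az_e(z_b-z_c)$), whose leading monomial is divisible by no Hibi leading term because the elements involved are pairwise comparable. Your write-up is slightly more complete in that you treat both the diamond and the pentagon explicitly and phrase the obstruction via Buchberger's criterion, whereas the paper only carries out the pentagon case.
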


\begin{proof}
It suffice to proof (b) $\Rightarrow$ (a): Suppose $L$ is not a distributive lattice. Then it contains at least one of the sublattices as shown in Figure \ref{Fig1}. Say, it contains sublattice on the right, then $z_bz_d-z_az_e$, $z_cz_d-z_az_e \in I_L$. Therefore $f=z_az_ez_b-z_az_ez_c \in I_L$. On the other hand $\ini_<(f)=z_az_ez_b$ is not divided by any initial term of a Hibi relation in $I_L$.
\end{proof}

\medskip
Now we come back to the main problem of this section concerning the indispensability of Hibi relations. For example, consider the Boolean lattice $B_3$, see Figure~\ref{Fig2}, which is the ideal lattice of the poset consisting of an anti-chain with three elements.

\begin{figure}[h]
\begin{center}
\psset{unit=0.8cm}
\begin{pspicture}(1,-1.5)(5,3)
\psline(3,3)(1.5,1.5)
\psline(3,3)(3,1.5)
\psline(3,3)(4.5,1.5)
\psline(1.5,1.5)(1.5,0)
\psline(1.5,1.5)(3,0)
\psline(3,1.5)(1.5,0)
\psline(3,1.5)(4.5,0)
\psline(4.5,1.5)(4.5,0)
\psline(4.5,1.5)(3,0)
\psline(1.5,0)(3,-1.5)
\psline(3,0)(3,-1.5)
\psline(4.5,0)(3,-1.5)
\rput(3,3){$\bullet$}
\put(3,3.2){$h$}
\rput(1.5,1.5){$\bullet$}
\put(1.2,1.5){$e$}
\rput(3,1.5){$\bullet$}
\put(2.6,1.5){$f$}
\rput(1.5,0){$\bullet$}
\put(1.1,-0.1){$b$}
\rput(4.5,1.5){$\bullet$}
\put(4.7,1.5){$g$}
\rput(3,0){$\bullet$}
\put(2.5,-0.1){$c$}
\rput(4.5,0){$\bullet$}
\put(4.7,-0.1){$d$}
\rput(3,-1.5){$\bullet$}
\put(2.8,-1.9){$a$}
\end{pspicture}
\end{center}
\caption{}\label{Fig2}
\end{figure}

The two Hibi relations $z_e z_d-z_a z_h$, $z_g z _b - z_a z_h $ can be replaced by the relations $z_e z_d - z_g z_b$, $z_g z_b - z_a z_h$ where the first of them is not a Hibi relation. Hence in this example, the Hibi relations are not indispensable.

We need some preparations to prove the main theorem of this section.

\begin{Lemma}
\label{first}
Let $L$ be distributive lattice and $f=z_a z_b - z_c z_d$ be a non-zero element in $I_L$. Then $a\wedge b = c\wedge d$ and $a\vee b= c\vee d$. In particular, if $c$ and $d$ are comparable, then $f$ is a Hibi relation.
\end{Lemma}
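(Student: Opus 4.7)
The plan is to use the toric description of $I_L$ as the kernel of $\varphi\colon T\to S$, $z_a\mapsto u_a$, given just before the lemma. Since $f\in I_L$, applying $\varphi$ yields $u_au_b = u_cu_d$ in the polynomial ring $S$. The heart of the proof will be the identity $u_au_b = u_{a\wedge b}u_{a\vee b}$ together with the fact that the ordered pair $(a\wedge b,\, a\vee b)$ can be read off uniquely from the monomial $u_au_b$.

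First I would verify the identity $u_au_b = u_{a\wedge b}u_{a\vee b}$ by comparing the exponents of each variable. For any $i$, the exponent of $x_i$ in $u_au_b$ equals the number of sets among $\{a,b\}$ containing $p_i$, which, because $L = \mathcal{I}(P)$ is distributive (so $a\wedge b = a\cap b$ and $a\vee b = a\cup b$), equals the number of sets among $\{a\cap b,\, a\cup b\}$ containing $p_i$; the same check works for $y_i$. Applying this identity on both sides of $u_au_b = u_cu_d$ gives
\[
u_{a\wedge b}\,u_{a\vee b} \;=\; u_{c\wedge d}\,u_{c\vee d}.
\]

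The next step is to recover the two poset ideals on the left and right from the monomial itself. For any $e\le g$ in $L$, the monomial $u_eu_g$ has $\deg_{x_i}(u_eu_g) = 2$ exactly when $p_i\in e$, and $\deg_{x_i}(u_eu_g)\ge 1$ exactly when $p_i\in g$. Hence $e$ and $g$ are determined by $u_eu_g$. Since $a\wedge b\le a\vee b$ and $c\wedge d\le c\vee d$, this reconstruction applied to both sides of the displayed equation above forces $a\wedge b = c\wedge d$ and $a\vee b = c\vee d$, which is the main claim.

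For the "in particular" statement, suppose $c$ and $d$ are comparable, say $c\le d$. Then $c\wedge d = c$ and $c\vee d = d$, so the main claim gives $c = a\wedge b$ and $d = a\vee b$, hence $f = z_az_b - z_{a\wedge b}z_{a\vee b}$ is a Hibi relation. The only point worth checking is that the non-vanishing hypothesis $f\neq 0$ is used only to rule out the degenerate case $\{a,b\} = \{c,d\}$ as multisets; this is not actually an obstacle since the argument above does not use $f\ne 0$ at all, and in the degenerate case $f$ is trivially a Hibi relation (with the convention $0 = z_az_b - z_{a\wedge b}z_{a\vee b}$ when $a = b$). The only mildly non-routine point is the reconstruction step, but it is immediate from the explicit form \eqref{monomial} of $u_a$.
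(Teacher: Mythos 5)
Your proof is correct and takes essentially the same route as the paper: both arguments apply $\varphi$ to get $u_au_b=u_cu_d$ and then recover $a\vee b$ and $a\wedge b$ from that monomial using the explicit form of $u_a$ (the paper reads them off from the $x$- and $y$-supports, you from the exponents after inserting the identity $u_au_b=u_{a\wedge b}u_{a\vee b}$). The extra care you take with the reconstruction step and the ``in particular'' clause is fine but not a different method.
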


\begin{proof}
For a monomial $u \in S=K[x_1,\ldots , x_n,y_1,\ldots, y_n]$ we set
\[
\supp_{x} (u)=\{x_i\:\; \text{$x_i$ divides $u$}\} \quad \text{and} \quad  \supp_{y} (u)=\{y_i\:\; \text{$y_i$ divides $u$}\}.
\]

Since $f\in \Ker(\phi)$, we have
\[
\supp_{x}(u_a u_b) = \supp_{x}(u_c u_d)\quad \text{and}\quad \supp_{y}(u_a u_b) = \supp_{y}(u_c u_d),
\]
where for $e \in L$, $u_e$ denotes the monomial defined as in (\ref{monomial}).

This implies that $a\wedge b = c\wedge d$ and $a\vee b= c\vee d$.
\end{proof}

In order to formulate the main result of this section we have to introduce some notation and concepts.
Let $L$ be a lattice and  $[a,b]$ be an interval of $L$ and $c,d \in [a,b]$. Then $d$ is called a {\em complement} of $c$ with respect to $[a,b]$ if $d \vee c=b$ and $d \wedge c =a$. The set $\{c,d\}$ is called a {\em complementary set} of $[a,b]$, if $\{c,d\} \neq \{a,b\}$. An interval is {\em complemented} if it admits a complementary set.

\begin{Lemma}
\label{Complement}
Let $L$ be a distributive lattice, $[a,b]$ an interval of $L$ and $c \in [a,b]$. Suppose c has a complement with respect to $[a,b]$, then this complement is uniquely determined.
\end{Lemma}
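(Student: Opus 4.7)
The plan is to exploit the distributivity of $L$ directly: suppose $d_1$ and $d_2$ are both complements of $c$ with respect to $[a,b]$, so that
\[
d_1\vee c = d_2 \vee c = b \quad \text{and} \quad d_1 \wedge c = d_2 \wedge c = a,
\]
and I would show $d_1 = d_2$ by a symmetric computation.

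First I would observe that since $d_1 \in [a,b]$ we have $d_1 \leq b$, hence $d_1 = d_1 \wedge b$. Substituting $b = d_2 \vee c$ and applying the distributive law gives
\[
d_1 = d_1 \wedge (d_2 \vee c) = (d_1 \wedge d_2) \vee (d_1 \wedge c) = (d_1 \wedge d_2) \vee a.
\]
Since $d_1,d_2 \geq a$ we have $d_1 \wedge d_2 \geq a$, so $(d_1 \wedge d_2) \vee a = d_1 \wedge d_2$, giving $d_1 = d_1 \wedge d_2$. The same argument with the roles of $d_1$ and $d_2$ swapped yields $d_2 = d_1 \wedge d_2$, and hence $d_1 = d_2$.

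There is no real obstacle here; the statement is a classical consequence of distributivity, and the only delicate point is to remember to use $a \leq d_1 \wedge d_2$ in order to absorb the $\vee a$ at the end. One could equivalently run the dual computation starting from $d_1 = d_1 \vee a = d_1 \vee (d_2 \wedge c)$, which by distributivity collapses to $d_1 \vee d_2$, and symmetrically for $d_2$; both routes depend only on the identities recorded in the hypothesis and the distributive law, so the proof is essentially a one-line manipulation.
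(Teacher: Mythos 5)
Your proof is correct. The paper disposes of this lemma in one line by appealing to the characterization of distributive lattices as those containing neither of the two forbidden five-element sublattices of Figure~\ref{Fig1}: if $c$ had two distinct complements $d_1,d_2$ in $[a,b]$, then $\{a,b,c,d_1,d_2\}$ would yield a copy of the pentagon or the diamond, which is impossible. You instead run the standard direct computation: from $d_1=d_1\wedge b=d_1\wedge(d_2\vee c)=(d_1\wedge d_2)\vee(d_1\wedge c)=(d_1\wedge d_2)\vee a=d_1\wedge d_2$ and the symmetric identity you conclude $d_1=d_2$. The two routes are both classical and both legitimate; yours has the advantage of being entirely self-contained and explicit, whereas the paper's argument implicitly requires a small case check (on whether $d_1$ and $d_2$ are comparable, and on which meets and joins coincide) to see that the five elements really do form one of the forbidden sublattices. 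Your absorption step $(d_1\wedge d_2)\vee a=d_1\wedge d_2$ is justified exactly as you say, since $d_1,d_2\ge a$ forces $d_1\wedge d_2\ge a$. No gaps.
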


\begin{proof}
The proof follows from the fact a distributive lattice does not contain a sublattice as shown in Figure~\ref{Fig1}.
\end{proof}

We call a lattice $L$ {\em uniquely relatively complemented} or a { \em URC-lattice}  if for every interval $ [a,b]$ of $L$ either $[a , b ]$ is a chain or there exists a unique complementary set $\{c,d\}$ of $[a,b]$.  The lattice $L$ is said to be a {\em conditionally URC-lattice}, if for each interval $[a,b]$ of $L$, a complementary set of $[a,b]$ is unique provided it exists.

The following figures show an example of a URC-lattice and a conditionally URC-lattice.

\begin{figure}[h]
\begin{center}
\psset{unit=0.8cm}
\begin{pspicture}(1,-2)(5,4)
\rput(-4,0){\psline(3,1)(2,2)
\psline(2,2)(3,3)
\psline(3,3)(4,2)
\psline(4,2)(3,1)
\psline(3,1)(2,0)
\psline(2,0)(3,-1)
\psline(3,-1)(4,0)
\psline(4,0)(3,1)
\psline(4,2)(5,1)
\psline(5,1)(4,0)
\psline(2,2)(1,1)
\psline(1,1)(2,0)
\rput(3,-2){URC Lattice}
}
\rput(-4,0){\rput(3,1){$\bullet$}
\rput(2,2){$\bullet$}
\rput(3,3){$\bullet$}
\rput(4,2){$\bullet$}
\rput(2,0){$\bullet$}
\rput(3,-1){$\bullet$}
\rput(4,0){$\bullet$}
\rput(5,1){$\bullet$}
\rput(1,1){$\bullet$}
}
\rput(4,0){\psline(3,1)(2,2)
\psline(2,2)(3,3)
\psline(3,3)(4,2)
\psline(4,2)(3,1)
\psline(3,1)(2,0)
\psline(2,0)(3,-1)
\psline(3,-1)(4,0)
\psline(4,0)(3,1)
\rput(3,1){$\bullet$}
\rput(2,2){$\bullet$}
\rput(3,3){$\bullet$}
\rput(4,2){$\bullet$}
\rput(2,0){$\bullet$}
\rput(3,-1){$\bullet$}
\rput(4,0){$\bullet$}
\rput(3,-2){Conditionally URC lattice}
}
\end{pspicture}
\end{center}
\caption{}\label{Fig3}
\end{figure}

\begin{Theorem}
\label{URC}
A URC lattice is distributive.
\end{Theorem}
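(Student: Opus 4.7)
The plan is to argue by contrapositive, exploiting the characterization recalled just before the statement: a lattice is distributive if and only if it contains neither of the two sublattices in Figure~\ref{Fig1} (the diamond $M_3$ and the pentagon $N_5$). So I would assume $L$ is not distributive and, in each of the two cases, exhibit an interval of $L$ that is not a chain and admits two distinct complementary sets, contradicting the URC condition.

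Suppose first that $L$ contains the diamond on the right of Figure~\ref{Fig1}, with $a$ on top, $b,c,d$ in the middle and $e$ at the bottom. I would look at the interval $[e,a]\subseteq L$. Because a sublattice inherits joins and meets from $L$, each of the three pairs $\{b,c\}$, $\{b,d\}$, $\{c,d\}$ satisfies $u\vee v = a$ and $u\wedge v = e$ in $L$. These pairs are distinct and all different from $\{e,a\}$, so $[e,a]$ has more than one complementary set; moreover $[e,a]$ is not a chain since $\{b,c,d\}$ is an antichain. This already contradicts the URC property. Next, suppose $L$ contains the pentagon on the left of Figure~\ref{Fig1}, with $a$ on top, $e$ at the bottom, $b>c$ on one side and $d$ on the other. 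In the interval $[e,a]$ the element $d$ admits both $b$ and $c$ as complements, so $\{b,d\}$ and $\{c,d\}$ are two distinct complementary sets, neither of which equals $\{e,a\}$; and $[e,a]$ is not a chain because $b$ and $d$ are incomparable. Hence $L$ again fails to be URC.

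There is essentially no obstacle: the only point that needs to be verified is that the complements witnessed inside the sublattice $M_3$ or $N_5$ remain complements when computed inside the ambient lattice $L$, which is automatic from the very definition of a sublattice (meets and joins in the sublattice agree with those of $L$). Combining the two cases yields the required implication \emph{URC $\Rightarrow$ distributive}.
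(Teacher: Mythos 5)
Your proof is correct and follows exactly the route the paper takes: the paper's own proof simply asserts that a URC lattice contains neither of the two forbidden sublattices of Figure~\ref{Fig1}, and you supply the missing details by exhibiting, in each case, the interval $[e,a]$ as a non-chain with two distinct complementary sets (using that meets and joins in a sublattice agree with those of $L$). The only slip is cosmetic: in Figure~\ref{Fig1} the diamond $M_3$ is the left-hand diagram and the pentagon $N_5$ the right-hand one, the reverse of what you wrote, but your internal labelling of each configuration is consistent and the argument is unaffected.
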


\begin{proof}
The proof follows from the fact that a URC lattice does not contain any sublattice shown in Figure~\ref{Fig1}.
\end{proof}

In the case that $L$ is a distributive lattice, the conditionally URC property can be characterized as follows.

\begin{Lemma}
\label{neighbors}
Let $L$ be a distributive lattice $L$. Then the following conditions are equivalent:
\begin{enumerate}
\item[{\em (a)}] For all $y \in L$, $y$ has at most two lower neighbors.
\item[{\em (b)}] For all $x \in L$, $x$ has at most two upper neighbors.
\item[{\em (c)}] $L$ is conditionally URC.
\end{enumerate}
\end{Lemma}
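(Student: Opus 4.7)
The plan is to translate all three conditions into a single statement about the poset $P$ of join-irreducibles (under the Birkhoff identification $L\cong\mathcal{I}(P)$), namely that $P$ contains no antichain of size three.

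First I would record that the lower neighbors of an ideal $y\in L$ are in bijection with the maximal elements of $y$ (each lower neighbor being obtained by deleting one such element), so (a) rephrases as the statement that every ideal of $P$ has at most two maximal elements. Since the maximal elements of an ideal form an antichain, and conversely every antichain $A\subseteq P$ is exactly the set of maximal elements of the ideal it generates, (a) is equivalent to $P$ containing no antichain of size three. Dually, the upper neighbors of $x\in L$ correspond to the minimal elements of $P\setminus x$, and the same reformulation yields (b); so (a) $\iff$ (b).

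For the equivalence with (c), I would use the isomorphism $[a,b]\cong\mathcal{I}(Q)$, where $Q=b\setminus a$ is a convex subset of $P$, together with the fact that every convex subset of $P$ arises in this way. Under this identification a complementary pair of $[a,b]$ corresponds to a partition $Q=J_1\sqcup J_2$ into nonempty subsets such that each $J_i$ is a down-set of $Q$. The central combinatorial step is to show that such partitions are precisely the splittings of $Q$ into disjoint unions of connected components: along any covering edge in $Q$, down-closedness of $J_1$ and $J_2$ forces both endpoints to lie in the same part. It follows that $[a,b]$ has exactly $2^{k-1}-1$ complementary pairs, where $k$ is the number of connected components of $Q$, and this is at most one precisely when $k\le 2$. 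Hence (c) is equivalent to the statement that every convex subset of $P$ has at most two connected components.

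Finally, the latter condition is itself equivalent to $P$ containing no antichain of size three: a three-element antichain is a convex subset with three singleton components, and conversely, picking one representative from each of at least three components of any convex subset yields an antichain of size three, since comparable elements necessarily lie in the same connected component. This closes the loop (a) $\iff$ (b) $\iff$ (c). The main obstacle is the combinatorial identification in the third paragraph — verifying that complementary pairs correspond exactly to partitions into unions of connected components, along with the bookkeeping that every convex subset of $P$ arises as some $b\setminus a$; the remaining steps then reduce to elementary facts about antichains.
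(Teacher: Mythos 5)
Your argument is correct, but it takes a genuinely different route from the paper. The paper works entirely inside the lattice: (a)$\Rightarrow$(b) by observing that three upper neighbors $l,m,n$ of $x$ produce three distinct lower neighbors $l\vee m$, $l\vee n$, $m\vee n$ of $l\vee m\vee n$; (c)$\Rightarrow$(a) by exhibiting two complementary sets of $[a\wedge b\wedge c, x]$ from three lower neighbors $a,b,c$; and (b)$\Rightarrow$(c) by a fairly delicate case analysis on two complementary sets of an interval, splitting into the cases where some $c_i$ is comparable to some $d_j$ and where all four elements are pairwise incomparable. You instead push everything down to the poset $P$ of join-irreducibles and reduce all three conditions to ``$P$ has no $3$-element antichain.'' The key structural input you need beyond the paper's is the dictionary $[a,b]\cong\mathcal{I}(b\setminus a)$ for convex $b\setminus a$, the fact that every convex subset arises this way, and the identification of complementary pairs with partitions of $Q$ into two nonempty down-sets, which (by convexity, so that comparability in $Q$ is witnessed by a saturated chain inside $Q$) are exactly the partitions into unions of connected components. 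This buys you a cleaner and more quantitative statement --- an interval has exactly $2^{k-1}-1$ complementary pairs, $k$ the number of components --- and it replaces the paper's hardest step, (b)$\Rightarrow$(c), with elementary bookkeeping; via Dilworth it also connects directly to condition (d) of Theorem~\ref{hot}. The only caveats are that your route presupposes $L$ finite (so that Birkhoff applies), which is the standing assumption of the paper anyway, and that the two glossed-over verifications you yourself flag (every convex subset is a difference of ideals; comparable elements of a convex $Q$ lie in one component of its Hasse graph) do both genuinely use convexity and should be spelled out.
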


\begin{proof}
(a)$\Rightarrow$(b):
Suppose $x \in L$ has three distinct upper neighbors,  say,  $l,m,n$. Since $L$ is distributive, it follows that $l \vee n \vee m$ has at least three distinct lower neighbors, namely, $l\vee m$, $l\vee n$ and $m\vee n$. This leads to contradiction to our assumption.

(b)$\Rightarrow$(a) is proved similarly.

(b)$\Rightarrow$(c): Suppose $L$ is not conditionally URC. Then there exists an interval $[a,b]$ of $L$ such that it has two distinct complementary sets  $\{c_1,c_2\}$ and $\{d_1,d_2\}$. It follows from Lemma~\ref{Complement}, that $\{c_1,c_2\} \cap \{d_1,d_2\} = \emptyset$.

Assume that one of the  $c_i$ is comparable with one of the $d_j$, say, $c_1<d_2$. Then $c_1 \wedge d_1 = a$, because  $a \leq c_1\wedge d_1 \leq  d_2 \wedge d_1 = a$. Then $c_1 \vee d_1 <b$. Let $b_1$ and $b_2$ be the two lower neighbors of $b$,  and $a_1$ and $a_2$ be the two upper neighbors of $a$. We may assume that $d_1< b_1$ and $d_2 <b_2$. We have $c_1 \vee d_1 \leq b_1 < b$ which implies $c_1 < b_1$. Since we assume that $c_1 < d_2$, we also get $c_1 < b_2$. On the other hand, $c_2 < d_1$ or $c_2 < d_2$, which gives $c_1 \vee c_2 < b$, a contradiction.

So, $c_1,c_2,d_1,d_2$ are pairwise incomparable. We may assume that $c_1,d_1 < b_1$ and $c_2,d_2<b_2$. Clearly, $c_1 \vee d_2 = b $. It follows from Lemma~\ref{Complement} that $c_1 \wedge d_2 > a$. We can assume that $c_1 \wedge d_2 \geq a_1 >a$ which gives $c_1 , d_2 \geq a_1$ and $c_2,d_1 \geq a_2$. This implies that $c_1\wedge d_1=a$,  since  $c_1\not\geq a_2$ and $d_1\not\geq a_1$. Distributivity of $L$ gives $d_1 = (c_1 \vee d_2) \wedge d_1 = (c_1 \wedge d_1)\vee(d_2 \wedge d_1) = a$, a contradiction.

(c)$\Rightarrow$(a): Suppose there exists $x \in L$ such that $x$ has at least three lower neighbors, say, $a,b,c$.  Since $L$ is distributive it follows that
\[
a \wedge b \neq b \wedge c \neq c \wedge a.
\]
The sets $\{a \wedge b ,c\}$,  $\{b \wedge c,a\}$ are distinct complementary sets of interval $[a \wedge b\wedge c,x]$, a contradiction.
\end{proof}

For an integer $k \geq 0$, we set $[k]_0 = \{0,1,\ldots,k\}$. Now we can state the main result of this section.

\begin{Theorem}
\label{hot}
Let $P$ be a finite poset and $L$ its ideal lattice. The following conditions are equivalent:

\begin{enumerate}
\item[{\em (a)}] For $I_L$ the Hibi relation are indispensable.
\item[{\em (b)}] $L$ is conditionally URC.
\item[{\em (c)}] In the poset $P$, all poset ideals are generated by at most $2$ elements.
\item[{\em (d)}] The poset $P$ can be covered by two disjoint chains, i.e,  we have chains $C$ and $D$ in $P$ such that $ V(P)= V(C) \cup V(D)$ and $V(C)\cap V(D) = \emptyset$.
\item[{\em (e)}] $L$ can be embedded as a full sublattice in $[m]_0\times [n]_{0}$, where $m =|C|$ and $n=|D|$.
\end{enumerate}
\end{Theorem}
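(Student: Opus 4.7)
The plan is to establish the cycle (a)$\Leftrightarrow$(b)$\Leftrightarrow$(c)$\Leftrightarrow$(d)$\Rightarrow$(e)$\Rightarrow$(b). The central equivalence (a)$\Leftrightarrow$(b) relies on the toric structure of $I_L = \Ker(\varphi)$. By Lemma~\ref{first}, the $\varphi$-fiber over the monomial $u_a u_b$ consists exactly of the degree-two monomials $z_c z_d \in T$ with $c \vee d = a \vee b$ and $c \wedge d = a \wedge b$; that is, of $z_{a \wedge b} z_{a \vee b}$ together with $z_c z_d$ for every complementary set $\{c,d\}$ of the interval $[a \wedge b,\, a \vee b]$. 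Since $I_L$ is a toric ideal generated in degree two, the Hibi relation $z_a z_b - z_{a\wedge b}z_{a \vee b}$ is indispensable precisely when this fiber has exactly two elements, equivalently when $\{a,b\}$ is the unique complementary set of its interval; imposing this for all Hibi relations is exactly the conditionally URC condition. For a concrete witness in the direction $\lnot$(b)$\Rightarrow\lnot$(a): if $[a,b]$ admits two distinct complementary sets $\{c_1,c_2\}$ and $\{d_1,d_2\}$, then the non-Hibi binomial $z_{c_1}z_{c_2} - z_{d_1}z_{d_2}$ can be substituted for the Hibi relation $z_{c_1}z_{c_2} - z_a z_b$ in any minimal binomial generating set of $I_L$, whence the latter is not indispensable.

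For (b)$\Leftrightarrow$(c), I apply Lemma~\ref{neighbors}: the conditionally URC condition is equivalent to each element of $L = \mathcal{I}(P)$ having at most two lower neighbors, and the lower neighbors of a poset ideal $I$ are exactly the ideals $I \setminus \{p\}$ for $p$ a maximal element of $I$; so the condition becomes ``each ideal of $P$ has at most two maximal elements,'' which is (c). For (c)$\Leftrightarrow$(d), an antichain of size three in $P$ would generate an ideal requiring three generators, so (c) forces the width of $P$ to be at most two. Dilworth's theorem then supplies a covering of $P$ by two chains $C, D$, which after thinning overlaps may be taken disjoint. Conversely, if $P = C \sqcup D$ is a disjoint union of chains, then any poset ideal of $P$ contains at most one maximal element from $C$ and one from $D$, hence has at most two generators.

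For (d)$\Rightarrow$(e), I would introduce the map $\phi \colon \mathcal{I}(P) \to \mathcal{I}(C) \times \mathcal{I}(D) \iso [m]_0 \times [n]_0$ defined by $I \mapsto (I \cap C,\, I \cap D)$. Injectivity is immediate from $P = C \cup D$, while intersection with $C$ and $D$ commutes with the lattice operations $\cup, \cap$ of $\mathcal{I}(P)$, making $\phi$ a full lattice embedding. To close the cycle via (e)$\Rightarrow$(b), I would note that $[m]_0 \times [n]_0$ is itself conditionally URC, since each $(i,j)$ has at most the two lower neighbors $(i-1,j)$ and $(i, j-1)$ (apply Lemma~\ref{neighbors}); and the conditionally URC property is inherited by any full sublattice, because two distinct complementary sets of an interval $[a,b]_L$ remain distinct complementary sets of the larger interval $[a,b]_{L'}$ in the ambient lattice. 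The main obstacle is the ``indispensability iff fiber of size two'' step in (a)$\Leftrightarrow$(b): this is standard for toric ideals (see \cite{HO}), but a self-contained argument must either invoke the fiber-graph criterion or construct the alternative minimal binomial generating set explicitly as indicated above; the remaining equivalences are routine combinatorics.
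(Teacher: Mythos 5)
Your argument is correct in substance and establishes the same cycle of implications, but two of your steps take a genuinely different route from the paper. For (c)$\Rightarrow$(d) the paper runs an explicit induction on $|P|$, extending a two-chain cover of $a_{s-1}$ to one of $a_s=a_{s-1}\cup\{p\}$ with a delicate case analysis on where $p$ attaches; you instead note that (c) is equivalent to $P$ having width at most two and invoke Dilworth's theorem, which is shorter and eliminates the case analysis (Dilworth already gives a partition, so no ``thinning'' is needed). For (e)$\Rightarrow$(b) the paper uses fullness to see that covers in $L$ are covers in $[m]_0\times[n]_0$ and then applies Lemma~\ref{neighbors}; your observation that distinct complementary sets of an interval of a sublattice remain distinct complementary sets of the corresponding interval in the ambient lattice (because meets and joins agree) is cleaner and does not even use fullness. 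Your (a)$\Leftrightarrow$(b) is the paper's argument recast in fiber language --- Lemma~\ref{first} identifies the fiber of $u_au_b$ with the complementary sets of $[a\wedge b,a\vee b]$, and the explicit replacement $z_{c_1}z_{c_2}-z_{d_1}z_{d_2}$ is exactly the paper's $h_3=h_1-h_2$ --- while (b)$\Leftrightarrow$(c) via Lemma~\ref{neighbors} and maximal elements of ideals matches the paper, done in both directions at once rather than around the cycle.

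The one place you should not wave your hands is fullness in (d)$\Rightarrow$(e): asserting that $\phi$ is a \emph{full} embedding because it is a lattice embedding is not a proof, and fullness is part of the assertion in (e). The paper closes this by exhibiting a saturated chain $\emptyset=a_0\subset a_1\subset\cdots\subset a_{m+n}=P$ of ideals with $|a_i\setminus a_{i-1}|=1$, whose image is a chain of length $m+n$ in $[m]_0\times[n]_0$; equivalently, adjoining a single element of $P$ to an ideal increments exactly one coordinate of $\phi$, so $\phi$ sends covers to covers. One added sentence of this kind completes your argument.
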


\begin{proof}
(a)$\Rightarrow $(b): Suppose that $L$ is not  conditionally URC. Then there exist an interval $[a,b]$ of $L$ such that it has two distinct complementary sets  $\{x,y\}$ and $\{r,s\}$. For these two sets, we have two  Hibi relations $h_1=z_x z_y - z_a z_b$ and $h_2 = z_r z_s - z_a z_b$ in $I_L$ which implies that $h_3 = z_x z_y - z_r z_s \in I_L$. The relation $h_3$ is not a Hibi relation and $h_1=h_2+h_3$. It shows that $h_1$ is dispensable.

(b)$\Rightarrow$(a): Let $L$ be a conditionally URC lattice and$H$ be the set of all Hibi relations in $I_L$. Take $f\in H$ where $f = z_cz_d-z_az_b$ and $\{c,d\}$ is a complementary set of $[a,b]$. Suppose $f$ is dispensable. Then it can be written as a $K$-linear combination of some other degree 2 binomials $g_1, \ldots , g_n$ in $I_L$ with $g_i\neq f$ for all $i$. It follows that $z_a z_b \in\supp g_i$ for some $i \in [n]$, say, $g_i=z_r z_s - z_a z_b$. From Lemma~\ref{first}, we know that $g_i$ must be a Hibi relation, i.e, $r \wedge s =a$ and $r \vee s =b$. Since $L$ is conditionally URC, we must have $\{c,d\}=\{r,s\}$. It gives $f=g_i$, a contradiction.

(b) $\Rightarrow$ (c): Suppose there exists a poset ideal $(p,q,r)$  of $P$ which is minimally generated by three elements. Clearly, $p,q$ and $r$ are incomparable in $P$. Let $b =(p,q,r)$. Then $b$ has three lower neighbors in $L$, namely $b / \{r\}$, $b / \{p\}$ and $b / \{q\}$, which contradicts Lemma~\ref{neighbors}.

(c)$\Rightarrow$(d): We choose a chain of ideals $\emptyset =a_0\subset a_1\subset a_2\subset \ldots \subset a_s=P$ with $\sharp(a_i \setminus a_{i-1}) = 1 $, for all $i$. Each $a_i$ may be viewed as subposet of $P$ which also satisfies condition (c).  Thus by induction on the cardinality of the poset we may assume that $a_{s-1}$ can be covered by two disjoint chains, say $C_0$ and $D_0$ with maximal elements $q$ and $r$ respectively. Take $p \in P$ such that $a_{s}=a_{s-1}\union\{p\}$.

Suppose that $p$ is comparable with either $q$ or $r$, say comparable with $q$. Then we let $C=C_0 \cup \{p\}$ and $D=D_0$. Otherwise we may assume that there exist a lower neighbor of $p$ in $D_0$ different from $r$. Let $D_0=\{ d_1, d_2, \ldots, d_k\}$ with $d_1 < d_2 < \ldots < d_k$. Suppose that the lower neighbor of $p$ in $D_0$ is $d_i$ with $i<k$. It follows that $d_{i+1}$ is comparable with $q$, because otherwise $(p,q,d_{i+1})$ is a 3-generated ideal, contradicting our assumption (c). In both cases, namely $q < d_{i+1}$ and $q > d_{i+1}$, we define $C= C_0 \cup \{d_{i+1}, \ldots , d_k\}$ and $D=\{d_1, \ldots, d_i, p\}$. Note that, if $q> d_{i+1}$, then $C_0 \cup \{d_{i+1}, \ldots, d_k\}$ is a chain. Otherwise, for any $c_i$ incomparable with some $d_{i+l}$ and $i+l < k$, we have $c_i$ incomparable with $p$, because $c_i < p$ gives $c_i \leq d_i < d_{i+l}$. Then the ideal $(c_i, d_{i+l}, p)$ is 3-generated ideal, a contradiction.

(d)$\Rightarrow$(e): Let $C$ and $D$ be given by $c_1< \ldots < c_n$ and $d_1<\ldots<d_m$ respectively.  We define the embedding $\varphi\: L\to \NN^2$ by
\[
\varphi(a)= \left\{ \begin{array}{ll}
        (i,j), & \text{if  $a \cap C=(c_i)$ and $a \cap D=(d_j)$}, \\
        (0,j), & \text{if  $a \cap C=\emptyset$ and $a \cap D=(d_j)$},\\
        (i,0), & \text{if  $a \cap C=(c_i)$ and $a \cap D=\emptyset$},\\
        (0,0), & \text{if  $a \cap C=\emptyset$ and $a \cap D=\emptyset$}.
        \end{array} \right.
\]
Observe first that $\varphi$ is injective. Indeed, if $\varphi(a)=\varphi(b)$, then $a\sect C=b\sect C$ and $a\sect D=b\sect D$. Since $P=C\union D$,  we then have
\[
a=a\sect P=a\sect (C\union D)=(a\sect C)\union (a\sect D)= (b\sect C)\union (b\sect D)=b\sect (C\union D)=b\sect P=b.
\]
Next we show that $\varphi(a\wedge b)=\varphi(a)\wedge \varphi(b)$. Let $\varphi(a)=(i,j)$ and $\varphi(b)=(k,l)$. Then,
\[
((a \wedge b) \cap C , (a \wedge b) \cap D) = ((a \cap C) \cap (b \cap C),(a \cap D) \cap (b \cap D))=(c_{\min\{i,k\}},d_{\min\{j,l\}}).
 \]
Therefore, $\varphi(a\wedge b)=(\min\{i,k\},\min\{j,l\})=\varphi(a)\wedge \varphi(b)$. For the join the argument is similar.

Now it remains to be shown that the embedding yields a full sublattice of $[m]_0\times [n]_{0}$, where $n =|C|$ and $m=|D|$. In other words we have to show that $\varphi(L)$ contains a chain of length $n+m$. For this consider the chain of ideals in $P$ which we introduced in the proof (c) $\Rightarrow$ (d). By construction, this chain has length $|P|=n+m$. Therefore $\varphi (a_0) < \varphi(a_1) < \cdots < \varphi(a_{n+m})$ is the desired chain in $\varphi(L)$.

(e)$\Rightarrow$(b): Let $(i,j) \in L$. Since $L$ is full sublattice of $[m]_0\times [n]_{0}$, it follows that each upper neighbor of $(i,j)$ is of the form $(i+1,j)$ or $(i,j+1)$. So the assertion follows from Lemma \ref{neighbors}.
\end{proof}
An interesting special case of the previous theorem is described in the next result.
\begin{Proposition}
\label{URC}
Let $P$ be a finite poset and $L$ be it ideal lattice. Then following conditions are equivalent.
\begin{enumerate}
\item[{\em (a)}] $L$ is a URC lattice.
\item[{\em (b)}] Either $P$ is a chain or it consists of two disjoint chains $C$ and $D$ such that all elements of $C$ are incomparable with all elements of $D$.
\item[{\em (c)}] There exist non-negative integers $m$ and $n$ such that $L \iso [m]_0 \times [n]_0$.
\end{enumerate}
\end{Proposition}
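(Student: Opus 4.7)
The plan is to prove (b) $\Leftrightarrow$ (c) structurally, (c) $\Rightarrow$ (a) by direct computation inside $[m]_0 \times [n]_0$, and reserve the main work for (a) $\Rightarrow$ (b), where I would exploit Theorem~\ref{hot} together with a connectivity argument. For (b) $\Leftrightarrow$ (c): if $P$ is a chain of size $k$, then $L \cong [k]_0 \cong [k]_0 \times [0]_0$; if $P = C \sqcup D$ with $C, D$ disjoint chains all of whose elements are pairwise incomparable across, then an ideal of $P$ is the disjoint union of an ideal of $C$ with an ideal of $D$, giving $L \cong \mathcal{I}(C) \times \mathcal{I}(D) \cong [|C|]_0 \times [|D|]_0$. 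Conversely, Birkhoff's theorem applied to $[m]_0 \times [n]_0$ recovers the poset of join-irreducibles $\{(i,0) : 1 \leq i \leq m\} \cup \{(0,j) : 1 \leq j \leq n\}$, which decomposes into two chains with no cross-comparability.

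For (c) $\Rightarrow$ (a), any interval $[(i,j),(k,l)]$ in $[m]_0 \times [n]_0$ is a chain whenever $i=k$ or $j=l$; otherwise a complementary pair $\{(a,b),(c,d)\}$ must satisfy $\min\{a,c\}=i$, $\max\{a,c\}=k$, $\min\{b,d\}=j$, $\max\{b,d\}=l$, forcing $\{a,c\}=\{i,k\}$ and $\{b,d\}=\{j,l\}$. The only such unordered pair different from the endpoints is $\{(i,l),(k,j)\}$, which is therefore the unique complementary set, so every interval is either a chain or admits a unique complementary set, proving URC.

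The main step is (a) $\Rightarrow$ (b). Since URC implies conditionally URC, Theorem~\ref{hot} gives a decomposition $P = C \cup D$ into disjoint chains. If $P$ is a chain, (b) holds; otherwise both $C, D$ are nonempty and I must rule out any comparability across them. Suppose for contradiction that some $c \in C$ is comparable with some $d \in D$. Then the comparability graph of $P$ becomes connected, since each of $C$ and $D$ is internally connected by its total order and the edge between $c$ and $d$ links the two pieces. Now a nontrivial complementary set of the interval $[\emptyset, P] \subseteq L$ would correspond to a partition of $P$ into two nonempty disjoint ideals $x, y$; but any comparability $p < q$ with $p$ and $q$ in different parts would, by downward closure of whichever part contains $q$, force $p$ into the same part, a contradiction. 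Thus no edge of the comparability graph could cross the partition, contradicting the connectedness established above. Hence $[\emptyset, P]$ admits no nontrivial complementary set, while on the other hand $P$ not being a chain forces $L$ to have two incomparable principal ideals, so $[\emptyset, P]$ is not a chain either, violating URC. The main obstacle is spotting that the correct interval to test is the whole lattice and translating the URC condition into the combinatorial statement about partitions of $P$ into ideals; once that translation is made the contradiction is almost immediate from connectivity.
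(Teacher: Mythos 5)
Your proof is correct, and the decisive implication (a)$\Rightarrow$(b) is handled by a genuinely different route than the paper's. Both arguments begin by invoking Theorem~\ref{hot} to cover $P$ by two disjoint chains, but where the paper then splits into cases according to whether $P$ has one or two minimal elements and hand-constructs a small non-chain interval admitting no complementary set, you test the single interval $[\emptyset,P]=L$: a complementary set there is precisely a partition of $P$ into two nonempty poset ideals, no comparability $p<q$ can cross such a partition (downward closure puts $p$ in whichever part contains $q$), and one cross-comparability between the two covering chains already makes the comparability graph of $P$ connected, so no such partition exists; since $P$ is not a chain, $L$ itself is a non-chain interval, contradicting URC. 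This is shorter and eliminates the paper's case analysis entirely. Your (b)$\Leftrightarrow$(c), via $\mathcal{I}(C\sqcup D)\cong\mathcal{I}(C)\times\mathcal{I}(D)$ together with Birkhoff's uniqueness of the poset of join-irreducibles, is the structural repackaging of the paper's counting argument $|L|=(m+1)(n+1)$, and it additionally gives (c)$\Rightarrow$(b) directly, which the paper only obtains by going around the cycle. Your (c)$\Rightarrow$(a) matches the paper's exhibition of the pair $\{(i,l),(k,j)\}$, except that you also verify its uniqueness explicitly from $\{a,c\}=\{i,k\}$ and $\{b,d\}=\{j,l\}$, a point the paper leaves to distributivity. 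The only step you assert without proof is that URC implies conditionally URC, but that is immediate: a chain interval cannot have a complementary set at all, since comparable complements would have to be the endpoints themselves.
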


\begin{proof}
(a)$\Rightarrow$(b): From Theorem~\ref{hot}, we know that there exist two disjoint chains $C$ and $D$ which cover $P$. Assume that $P$ does not satisfy $(b)$. Then $P$ contains two incomparable elements, say $p_1 \in C$ and $p_2 \in D$. Moreover, there exist $c \in C$ and $d \in D$ such that they are comparable. We may assume that $c_i > d_j$.

Suppose that $P$ has only one minimal element, say $q$. The interval $[\emptyset, (p_1, p_2)]$ of $L$ is not a chain because it contains two incomparable elements $(p_1)$ and $(p_2)$. Moreover, this interval does not have a complementary set because the only upper neighbor of $\emptyset$ in $L$ is$(p)$, a contradiction.

Now suppose that $P$ has two minimal elements, say $q_1 \in C$ and $q_2 \in D$. It follows that $c > q_1, q_2$. Let $c'$ be the minimal element in $C$ with this property. Then $c'$ has two incomparable lower neighbors $r_1$ and $r_2$ in $P$. Therefore it follows that the interval $[(r_1)\cap(r_2), (c')]$ of $L$ is not a chain and does not have a complementary set, because $(r_1,r_2)$ is the only lower neighbor of $(c')$ in $[(r_1)\cap(r_2), (c')]$, again a contradiction.

(b) $\Rightarrow$ (c): If $P$ is a chain then $L \iso [m]_0 \times [0]_0$. Otherwise, $P$ is the disjoint union of two chains $C:c_1 < c_2 < \ldots < c_m$ and $D: d_1 < d_2 < \ldots < d_n$, where none of the $c_i$ is comparable with any of the $d_j$. As in the proof of (d) $\Rightarrow$ (c) of Theorem~\ref{hot}, we have the embedding $\varphi:L \rightarrow [m]_0 \times [n]_0$. To show that $\varphi$ is an isomorphism it is enough to show that $|L|=(m+1)(n+1)$. To see this we observer that if $\alpha \in L$ then $\alpha= \emptyset$ or $\alpha= (c_i)$ or $\alpha= (d_j)$ or $\alpha= (c_i,d_j)$. It is obvious that ideals $\emptyset$, $(c_i)$, $(d_j)$ are pairwise distinct, and that these ideals are also different from the 2-generated ideal $(c_i,d_j)$. Suppose now that $(c_i,d_j)=(c_k,d_l)$. Since the elements of $C$ are all incomparable with elements of $D$, it follows that $c_i \leq c_k$ and $d_j \leq d_l$. Similarly one has $c_k \leq c_i$ and $d_l \leq d_j$. Altogether we conclude that $|L|=(m+1)(n+1)$.

(c)$\Rightarrow$(a): Let $L \iso [m]_0 \times [n]_0$ for some non-negative integers $m$ and $n$. To show that $L$ is indeed a URC lattice, it is enough to show that every interval in $L$ which is not a chain has a complementary set. Let $[(i,j),(k,l)]$ be an interval in $L$ with $i<k$ and $j<l$. There exist two incomparable elements $a,b \in L$, namely $a=(k,j)$ and $b=(i,l)$ with $a \wedge b=(i,j)$ and $a \vee b=(k,l)$.
\end{proof}

\section{Gr\"obner bases of Hibi rings with respect to rank lexicographic orders}

In this section we want to classify all distributive lattices with the property that with respect to the rank lexicographic order the Hibi ideal of the lattice has a reduced Gr\"obner basis consisting of Hibi relations.

In order to formulate our main result we introduce some terminology. Let $L$ be a full sublattice of $[m]_0\times [n]_{0}$. Let $(i,j)$ be an element in $L$ such that $(i-1,j),(i+1,j),(i,j+1),(i,j-1)$ also belong to L. We call it an {\em upper corner} if $(i-1,j+1) \notin L$ and $(i+1,j-1) \in L$, a {\em lower corner} if $(i-1,j+1) \in L$ and $(i+1,j-1) \notin L$ and {\em critical corner} if $(i-1,j+1) \notin L$ and $(i+1,j-1) \notin L$. A lattice $L$ is called a {\em chain ladder}, (see \cite{AH}), if all upper corners and lower corners appear in a chain and that, for any two corners $(i,j) \neq (i',j')$ of $D$, one has $i \neq i'$ and $j \neq j'$.

\begin{Theorem}
\label{defense}
Let $L$ be a distributive lattice. The following conditions are equivalent:
\begin{enumerate}
\item[{\em (a)}] The  reduced  Gr\"obner basis of $I_L$ with respect to a rank lexicographic order consists of all Hibi relations in $I_L$.
\item[{\em (b)}] The Hibi relations are indispensable, and $I_L$ has a reduced quadratic Gr\"obner basis with respect to a rank lexicographic order.
\item[{\em (c)}] $L$ is conditionally URC, and for all $a<b<c$ in $L$ such that $[a,b]$ and $[b,c]$ have complementary sets, it follows that either $[g,c]$ or $[h,c]$ is complemented, where $\{g,h\}$ is the complementary set of $[a,b]$.
\item[{\em (d)}]  $L$ is isomorphic to a chain ladder without critical corners.
\end{enumerate}
\end{Theorem}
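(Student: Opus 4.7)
The plan is to prove (a) $\Rightarrow$ (b) $\Rightarrow$ (c) $\Rightarrow$ (a) as a cycle, and to handle (c) $\Leftrightarrow$ (d) as a separate combinatorial translation. Throughout, the key preliminary observation is to identify leading terms under the rank lexicographic order: for a Hibi relation $z_cz_d-z_az_b$ with $\{c,d\}$ a complementary set of $[a,b]$, the element $b$ has strictly greater rank than $a,c,d$, so $z_b$ is the largest variable involved and $z_az_b$ is the leading term.

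For (a) $\Rightarrow$ (b), a reduced Gr\"obner basis has pairwise distinct leading terms, so if the set of all Hibi relations forms one, each interval $[a,b]$ can carry at most one complementary set. Hence $L$ is conditionally URC, and Theorem~\ref{hot} upgrades this to indispensability; quadraticity is automatic since Hibi relations have degree $2$. For (b) $\Rightarrow$ (c), indispensability forces conditional URC (Theorem~\ref{hot}), and Lemma~\ref{first} combined with uniqueness of complementary sets shows every quadratic binomial in $I_L$ is a Hibi relation, so the reduced quadratic Gr\"obner basis itself consists of Hibi relations. Given $a<b<c$ with $[a,b]$ and $[b,c]$ complemented by $\{g,h\}$ and $\{p,q\}$, the $S$-polynomial of $z_az_b-z_gz_h$ and $z_bz_c-z_pz_q$ computes to $z_az_pz_q-z_cz_gz_h$, whose leading term under rank lex is $z_cz_gz_h$ (since $c$ has the largest rank). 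For this to reduce via a Hibi relation, some Hibi leading term must divide $z_cz_gz_h$; since $g,h$ are incomparable, $z_gz_h$ is never a leading term, leaving only $z_cz_g$ and $z_cz_h$, which appear as leading terms exactly when $[g,c]$ or $[h,c]$ is complemented. Thus (c) holds.

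For (c) $\Rightarrow$ (a), I verify Buchberger's criterion on the set $G$ of all Hibi relations. Conditional URC gives pairwise distinct leading terms, and no trailing term $z_gz_h$ (with $g,h$ incomparable) is divisible by any leading term, so $G$ is already reduced. $S$-polynomials between Hibi relations with coprime leading terms reduce to zero automatically, leaving three overlap cases depending on how the leading terms $z_az_b$ and $z_{a'}z_{b'}$ share a variable: the stacked case $b=a'$ is precisely the situation handled by condition (c) via the calculation just performed; the shared-top case $b=b'$ and the shared-bottom case $a=a'$ are dispatched by rewriting the $S$-polynomial in the form $z_az_{g'}z_{h'}-z_{a'}z_gz_h$ (or its symmetric version) and iterating condition (c) inside the embedding $L\hookrightarrow[m]_0\times[n]_0$ from Theorem~\ref{hot}(e).

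For (c) $\Leftrightarrow$ (d), I exploit this embedding: an interval $[(i_1,j_1),(i_2,j_2)]$ in $L$ is complemented precisely when $i_1<i_2$, $j_1<j_2$, and both off-diagonal points $(i_1,j_2),(i_2,j_1)$ lie in $L$. Translating (c) through this dictionary yields: for every triple $(i_1,j_1)<(i_2,j_2)<(i_3,j_3)$ that is strictly increasing in both coordinates and whose two consecutive rectangles are filled, either $(i_1,j_3)\in L$ or $(i_3,j_1)\in L$. The failure of this at a minimal middle point $(i_2,j_2)$ is exactly the definition of a critical corner, so (c) corresponds to a chain ladder without critical corners. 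The main obstacle is the Buchberger verification for the non-stacked overlap cases in (c) $\Rightarrow$ (a), where the reduction is not closed by a single Hibi relation and one must leverage the chain-ladder geometry together with iterated applications of condition (c); a secondary subtlety is keeping track of which off-diagonal points lie in $L$ when translating the local condition (c) into the global ``no critical corner'' statement of (d).
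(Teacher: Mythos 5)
Your cycle (a)$\Rightarrow$(b)$\Rightarrow$(c) agrees in substance with the paper: the identification of $z_az_b$ (with $b=c\vee d$ of top rank) as the leading term, the use of Lemma~\ref{first} plus conditional URC to see that every nonzero quadratic binomial in $I_L$ is a Hibi relation, and the $S$-polynomial computation $z_az_ez_d-z_cz_gz_h$ forcing $[g,c]$ or $[h,c]$ to be complemented are all exactly the paper's arguments. The divergence, and the problem, is in how you close the loop. The paper does \emph{not} attempt a direct Buchberger verification from (c); it proves (c)$\Rightarrow$(d) and then gets (d)$\Rightarrow$(b) by invoking \cite[Theorem 2.5]{AHH}, where the quadratic Gr\"obner basis of a chain ladder is established. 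Your (c)$\Rightarrow$(a) is precisely the step you leave open: for the ``stacked'' overlap $b=a'$, condition (c) only supplies one division step on the leading term $z_cz_gz_h$, not a full reduction of the $S$-polynomial to zero, and for the shared-top and shared-bottom overlaps you say only that they are ``dispatched by rewriting and iterating condition (c)'' while simultaneously calling this ``the main obstacle.'' As written, the implication from (c) (or (d)) back to (a)/(b) is not proved; this is the genuinely hard content of the theorem and you would either have to carry out the full reduction argument in the $[m]_0\times[n]_0$ picture or, as the paper does, reduce to the chain-ladder result of Aramova--Herzog--Hibi.

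There is a second gap in (c)$\Leftrightarrow$(d). Your dictionary for complemented intervals in a full sublattice of $[m]_0\times[n]_0$ is correct, and the critical-corner part of the translation is fine. But ``chain ladder'' demands more than the absence of critical corners: all upper and lower corners must lie on a chain, and any two distinct corners must differ in \emph{both} coordinates. The paper proves these by two separate applications of condition (c): one ruling out a pair of incomparable corners (which would necessarily be one upper and one lower corner spanning a filled rectangle), and one ruling out two corners sharing a coordinate. Your sketch says nothing about either case, so even granting your critical-corner argument you have only shown (c) implies ``no critical corners,'' not (c) implies (d). Conversely, for (d)$\Rightarrow$(c) you would need to use the chain-ladder structure to produce the required complemented interval $[g,c]$ or $[h,c]$, which again is not addressed.
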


\begin{proof}
(a)$\Rightarrow$ (b): We have a quadratic Gr\"obner basis since Hibi relations are quadratic.
Suppose that $f$ is a quadratic binomial relation with $ \ini(f)=z_az_b$. It follows from (a) that $a$ and $b$ are comparable. Therefore Lemma~\ref{first} implies that $f=z_a z_b-z_c z_d$,where $[c,d]$ is complementary pair of $[a,b]$, as desired.

(b)$\Rightarrow$ (a): Let $f$ be a binomial in reduced Gr\"obner basias of $I_L$. By our assumption $f$ is a quadratic binomial. Since Hibi relations are indispensable $f$ must be a Hibi relation.

(b)$\Rightarrow$(c): From Theorem~\ref{hot}, we know that $L$ is conditionally URC and it can be identified with a full sublattice in $[m]_0\times [n]_{0}$. Let $a)$, $b$ and $c$ be the elements in $L$ such that $[a,b]$ and $[b,c]$ are complemented with complementary pairs $\{g,h\}$, and $\{d,e\}$, respectively.

Consider the S-polynomial $z_c z_g z_h - z_a z_e z_d$ of the Hibi relations $z_a z_b - z_g z_h$ and $z_b z_c - z_e z_d$. The monomial $z_cz_gz_h$ is the leading term of the S-polynomial. Since by our assumption the Gr\"obner basis of $I_L$ consists of Hibi relations, it follows that there exits a Hibi relation with initial term $z_cz_g$ or $z_cz_h$. This implies that the interval $[g,c]$ or $[h,c]$ is complemented.

(c)$\Rightarrow$(d): Since $L$ is a conditionally URC, we may identify it with a full sublattice in $[m]_0 \times [n]_0$. Suppose $L$ has a critical corner $b=(i,j)$. By definition of critical corner $(i-1,j),(i+1,j),(i,j+1),(i,j-1)\in L$. Therefore, since $L$ is a lattice, $a=(i-1,j-1)$ and $c=(i+1,j+1)$ belong to $L$. Let $[a,b] = [(i-1,j-1),(i,j)]$ and $[b,c]=[(i,j),(i+1,j+1)]$, and $d=(i,j+1)$ and $e=(i+1,j)$.
Since $(i-1,j+1) \notin L$ and $(i+1,j-1) \notin L$, it follows that $[a,d]$ and $[a,e]$ are not complemented, a contradiction.

It remains to show that $L$ is a chain ladder. First, suppose that $L$ has two incomparable corners, $x=(i,j)$ and $y=(k,l)$. Then we may assume $i<k$, $j >l$. Since $L$ is a lattice it contains also the elements $w =x \wedge y=(i,l)$ and $z=x \vee y=(k,j) $ and since $L$ is a full sublattice of $[m]_0 \times [n]_0$, it contain all elements $\{(r,s)\:\;i \leq r \leq k, l \leq s \leq j \}$. This implies $x$ is an upper corner and $y$ is a lower corner. By definition of corners, it follows that $d=(i-1,j)$, $e=(i,j+1)$, $g=(k+1,l)$ and $f=(k-1,l)$ belong to $L$. Hence $a=d\wedge f=(i-1,l-1)$ and $c= e \vee g=(k+i,j+i)$ belong to $L$. Now we have $a<b<c$. The interval $[a,b]$, $[b,c]$ are complemented. Therefore, either the interval $[d,c]$ and $[f,c]$ must be complemented by our assumption (c), contradicting the fact that $x$ and $y$ are upper and lower corners respectively.

Now suppose $L$ has two corners $a=(i,j)$ and $b=(k,l)$ such that either $i=k$ or $j=l$. Let $j=l$. We can assume that $i<k$. It gives $a<b$. By the definition of corners, the elements $(i-1,j)$, $i,j-1$, $(i+1,j)$, $(i,j+1)$ and $(k+1,j)$, $(k,j+1)$, $(k-1,j)$, $(k,j-1)$ belong to $L$. Since $L$ is a full sublattice of $[m]_0 \times [n]_0$, it follows that $[(i,j-1), (k,j-1)] \subset L$. In particular $(i+1,j-1) \in L$. This shows $a$ is an upper corner. Similarly one shows that $b$ is a lower corner. Since $L$ is a lattice $c=(i-1,j-1)$ and $d=(k+1,j+1)$ also belong to $L$. We have $c<b<d$ and also the intervals $[c,b]$ and $[b,d]$ are complemented. From (c), we know that either $[(k,j-1), d]$ or $[(i-1,j+1), d]$ must be complemented, in other words, either $(i-1,j+1)$ or $(k+1,j-1)$ must belong to $L$. This contradicts our supposition. A similar argument holds if we assume $i=k$.

(d)$\Rightarrow$(b): It is shown \cite[Theorem 2.5]{AHH} that $I_L$ has a quadratic Gr\"obner basis under the additional assumption that $L$ is simple. In the same way it is shown that $L$ has quadratic Gr\"obner basis even if it is not simple, provided it satisfies (d). Since $L$ is a conditionally URC, it follows from Lemma~\ref{hot}, that Hibi relations are indispensable.
\end{proof}

\section{Rees rings of Hibi ideals}

Let $L$ be the ideal lattice of the poset $P=\{p_1, \ldots, p_n\}$, and $S=K[\{x_{p_i},y_{p_i}\}_{{p_i}\in P}]$ be the polynomial ring in $2n$ variables over a field $K$ with $\deg x_{p_i} = \deg y_{p_i} =1$. Recall that to each element $a \in L$, we associate a squarefree monomial $u_a=\prod_{p_i\in a}x_i\prod_{p_i\not \in a}y_i$ and the Hibi ideal $H_L$ is defined to be the ideal of $S$ generated by such monomials, i.e. $H_L =( u_a | a \in L)$, see \cite{HH1}.

Let $\mathcal{R}(H_L)$ denote the Rees algebra of $H_L$ and $J_L$ be the defining ideal of $\mathcal{R}(H_L)$. In other words, $\mathcal{R}(H_L)$ is the affine semigroup ring given by
\[
\mathcal{R}(H_L)=S[\{u_{a}t\}_{a \in L}]= K[\{x_{p_i},y_{p_i}\}_{{p_i} \in P},\{u_{a}t\}_{a \in L}]  \subset K[\{ x_{p_i}, y_{p_i}\}_{{p_i} \in P},t],
\]
and $J_L$ is the kernel of the surjective ring homomorphism $\phi : R \rightarrow \mathcal{R}(H_L)$ where
\[
R=S[\{z_a\}_{a \in L}]=K[\{x_{p_i},y_{p_i}\}_{{p_i} \in P},\{z_{a}\}_{a \in L}]
\]
is a polynomial ring over $K$ and $\phi$ is defined by setting
\begin{eqnarray}\label{phi}
\phi(x_{p_i}) =x_{p_i} , \, \phi(y_{p_i})=y_{p_i} ,\,  \phi (z_a)=u_{a}t.
\end{eqnarray}

In this section we are interested in the Gr\"obner basis of $J_L$ with respect to a suitable lexicographical orders. We define a term order on $R=K[\{x_{p_i},y_{p_i}\}_{{p_i} \in P},\{z_{a}\}_{a\in L}]$ and for the sake of convenience we write $x_i, y_i$ instead of $x_{p_i} ,y_{p_i}$. The term order on $R$, denoted by $<^1_{lex}$, is defined to be the product order of the lexicographic order on $S$ induced by $x_1>\cdots>x_n>y_1>\cdots> y_n$ and a rank lexicographical order on $T$. In particular $x_i >^1_{lex} y_j >^1_{lex} z_a$ for all $i$, $j$ and $a$.

Let $a_1$ and $a_2$ be two poset ideals of $P$ such that $a_2=a_1 \cup \{p_i\}$. To each such pair of poset ideals, we associate a binomial $x_i z_{a_1} - y_i z_{a_2}$, and call it a {\em special linear relation} in $R$.

Now we state the main theorem of this section.
\begin{Theorem} \label{Rees}
Let $L$ be a distributive lattice. Then following conditions are equivalent.
\begin{enumerate}
\item[{\em (a)}] $L$ is a URC lattice.
\item[{\em (b)}] The reduced Gr\"obner basis of $J_L$ with respect to $<^1_{lex}$ consists of Hibi relations and special linear relations.
\end{enumerate}
\end{Theorem}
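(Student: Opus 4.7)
The plan is to prove the two implications separately. The forward direction (a) $\Rightarrow$ (b) is verified by Buchberger's criterion starting from the structural description of URC lattices, while the reverse direction (b) $\Rightarrow$ (a) proceeds by elimination followed by constructing an explicit obstruction.

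For (a) $\Rightarrow$ (b), I would first invoke Proposition \ref{URC} to identify $L$ with $[m]_0 \times [n]_0$ and $P$ with a disjoint union of two chains (or a single chain), so that every non-chain interval of $L$ is a rectangle, its unique complementary pair corresponds to the two off-diagonal corners of the rectangle, and all four corners are again elements of $L$. Let $G$ denote the union of Hibi relations and special linear relations. Under $<^1_{lex}$, their initial terms are $z_a z_b$ (for the comparable pair at opposite corners of a rectangular interval) and $x_i z_c$, respectively. I would then verify Buchberger's criterion case by case. The Hibi-Hibi S-polynomials reduce by Theorem \ref{defense}, because $[m]_0 \times [n]_0$ is in particular a chain ladder without critical corners. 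The special-special S-polynomials reduce by direct computation, using that any two compatible additions $p_i, p_j$ can be performed in either order and remain inside $L$. The Hibi-special S-polynomials split into subcases according to whether the $z$-variable in the special's initial coincides with one of the two $z$-variables in the Hibi's initial. In each subcase, the rectangular structure of $L$ ensures that the intermediate ideals produced during reduction (such as $g \cup \{p_i\}$, $h \cup \{p_i\}$, $a \cup \{p_i\}$) are again in $L$, and the S-polynomial collapses, typically to $y_i z_{c \cup \{p_i\}}$ times a Hibi relation. Reducedness of $G$ is then immediate from inspection of initial terms.

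For (b) $\Rightarrow$ (a), I first observe that since $<^1_{lex}$ places all $x_i, y_i$ above all $z_a$, elimination of the $x$ and $y$ variables leaves a reduced Gr\"obner basis of $J_L \cap T$ with respect to the induced rank lexicographic order, consisting of exactly those elements of the reduced Gr\"obner basis of $J_L$ that lie in $T$. Since $J_L \cap T = I_L$ and by hypothesis those are only the Hibi relations, Theorem \ref{defense} forces $L$ to be a chain ladder without critical corners, hence conditionally URC and embeddable as a full sublattice of some $[m]_0 \times [n]_0$. Assume for contradiction that $L$ is not URC; by Proposition \ref{URC} this embedding is proper, so some cell is missing from $L$. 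I would localize the failure by choosing $a, b \in L$ with $a < b$ such that $[a, b]$ is a non-chain interval admitting no complementary pair, together with a covering $c < c \cup \{p_i\}$ in $L$ adjacent to this interval. The S-polynomial of the appropriate Hibi relation with the special linear relation $x_i z_c - y_i z_{c \cup \{p_i\}}$ would then have leading term of the form $x_i z_\alpha z_\beta$ that is divisible neither by the initial of any Hibi relation (because the relevant interval has no complementary pair, a direct consequence of the missing cell) nor by the initial of any special linear relation (because the corresponding one-element addition fails to yield an ideal). This yields an element of the reduced Gr\"obner basis of $J_L$ outside $G$, contradicting (b).

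The main obstacle is the bookkeeping in the forward direction: verifying that each shifted ideal produced along a Hibi-special S-polynomial reduction again lies in $L$ and is matched by an element of $G$. This is where the full strength of $L = [m]_0 \times [n]_0$ is used, rather than merely conditional URC. In the reverse direction, the delicate point is to guarantee that whenever $L$ is a chain ladder without critical corners but strictly smaller than its ambient rectangle, the missing cell can always be positioned next to a covering relation of $L$, so that both factors of the obstructing S-polynomial are available simultaneously; I expect this to follow from analyzing the local shape of $L$ near any cell of $[m]_0 \times [n]_0$ absent from $L$.
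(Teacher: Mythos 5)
Your proposal follows the same route as the paper in both directions: for (a)$\Rightarrow$(b), Buchberger's criterion applied to the set $G$ of Hibi relations and special linear relations after identifying $L$ with $[m]_0\times[n]_0$ via Proposition~\ref{URC}, with the Hibi--Hibi pairs disposed of by Theorem~\ref{defense}; for (b)$\Rightarrow$(a), elimination of the $x,y$ variables to reduce to Theorem~\ref{defense}, followed by an obstructing S-polynomial when $L$ is a proper full sublattice of its ambient rectangle. Two points need attention before this is complete. First, Buchberger's criterion only certifies that $G$ is a Gr\"obner basis of the ideal \emph{it generates}; you must separately know that $G$ generates $J_L$, which the paper imports from \cite[Theorem 1.1]{HH1} --- without that input the forward direction proves the wrong statement. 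Second, you explicitly defer the key construction in the reverse direction (positioning the missing cell next to a covering relation so that both factors of the obstructing S-pair exist in $G$); the paper resolves this concretely by taking the maximal element $a=(i,j)$ of the chain of upper and lower corners, observing that the interval from $a$ to the top element $(m,n)$ is complemented, and forming the S-polynomial of the Hibi relation $z_az_f-z_ez_g$ with the special linear relation $x_pz_a-y_pz_c$ (where $c=a\cup\{p\}$), whose reduction to zero would force the absent cell $(i-1,j+1)$ into $L$. With the generation statement cited and that explicit choice made, your argument coincides with the paper's.
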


\begin{proof}
(a) $\Rightarrow$ (b): From \cite[Theorem 1.1]{HH1} and its proof, we know that $J_L$ is minimally generated by Hibi relations and special linear relations. Let $M$ be the set of these relations. To show that $M$ is a reduced Gr\"obner basis of $J_L$ with respect to $<^1_{lex}$, we must show that all S-pairs $S(f_i,f_j) , 1\leq i,j\leq n$ reduce to $0$. Take $f_i,f_j \in M$ and consider the non-trivial case when $\gcd(\ini_{<}(f_i),\ini_{<}(f_j)) \neq 1$. For any binomial, we always write the leading term as the first term.

If $f_i$ and $f_j$ are both Hibi relation then $S(f_i,f_j)$ reduces to 0 because of Theorem~\ref{defense}. Next we consider the case that $f_i$ is a Hibi relation and $f_j$ is a special linear relation. Say,
\[
f_i= z_d z_a - z_b z_c \quad \text{with} \quad d>a, \quad \text{and} \quad f_j =x_p z_a- y_p z_e \quad \text{or} \quad f_j= x_{p}z_d - y_{p}z_e.
 \]
Let us first assume that $f_j=x_p z_a- y_p z_e$. Then it follows from the relation $f_j$ that $a$ is a lower neighbor of $e$. From Proposition~\ref{URC}, we know that $L \iso [m]_0 \times [n]_0$. Let $b=(i,j)$ and $c=(k,l)$ with $i<k$ and $j>l$. Then $a=(i,l)$ and $d=(k,j)$. Since $a$ is a lower neighbor of $e$, we have $e=(i,l+1)$ or $e=(i+1,l)$. Assume $e=(i,l+1)$. Take $f = (k,l+1)$. Then $c$ is a lower neighbor of $f$, see Figure~\ref{Fig4}.

\begin{figure}[h]
\begin{center}
\psset{unit=1.2cm}
\begin{pspicture}(1,1)(5,3)
\psline(3,1)(1.75,2.25)
\psline(1.75,2.25)(2.75,3.25)
\psline(2.75,3.25)(4,2)
\psline(4,2)(3,1)
\psline(3.5,2.5)(2.5,1.5)
\rput(3,1){$\bullet$}
\put(2.95,0.7){$a$}
\rput(1.75,2.25){$\bullet$}
\put(1.5,2.25){$b$}
\rput(2.75,3.25){$\bullet$}
\put(2.7,3.35){$d$}
\rput(4,2){$\bullet$}
\put(4.1,2){$c$}
\rput(2.5,1.5){$\bullet$}
\put(2.2,1.3){$e$}
\rput(3.5,2.5){$\bullet$}
\put(3.6,2.6){$f$}
\put(2.8,1.4){$p$}
\put(3.5,2){$p$}
\end{pspicture}
\end{center}
\caption{}\label{Fig4}
\end{figure}

If $b=e$, then we also have $d=f$ and we obtain
\[
S(f_i,f_j)=x_p z_b z_c - y_p z_d z_e = z_b (x_p z_c - y_p z_d).
\]
Therefore $S(f_i,f_j)$ reduces to 0.

Now, if $b>e$, then we first observe $\{b,f\}$ is the complementary set in $[e,d]$. Therefore, in this case
\[
S(f_i,f_j)= x_p z_b z_c - y_p z_d z_e = z_b ( x_p z_c - y_p z_f) - y_p (z_d z_e - z_b z_f ).
\]
It shows that $S(f_i,f_j)$ again reduces to zero.

Next assume that $f_j=x_p z_d - y_p z_e$. It follows from the relation $f_j$ that $d$ is lower neighbor of $e$. Let $b=(i,j)$ and $c=(k,l)$ with $i<k$ and $j>l$. Then $a=(i,l)$ and $d=(k,j)$ and either $e=(k,j+1)$ or $e=(k+1,j)$. We can assume that $e=(k+1,j)$. Since the interval $[a,e]$ has the complementary set $\{b,g\}$, the interval $[c,e]$ has the complementary set $\{d,g\}$ where $g=(k+1,l)$, see Figure~\ref{Fig5}.

\begin{figure}[h]
\begin{center}
\psset{unit=1.2cm}
\begin{pspicture}(0,0.5)(6,3)
\psline(3,1)(2,2)
\psline(2,2)(3,3)
\psline(3,3)(4,2)
\psline(4,2)(3,1)
\psline(3,1)(2.5,0.5)
\psline(2,2)(1.5,1.5)
\psline(1.5,1.5)(2.5,0.5)
\rput(3,1){$\bullet$}
\put(3.08,.75){$c$}
\rput(2,2){$\bullet$}
\put(1.75,2.08){$d$}
\rput(3,3){$\bullet$}
\put(3,3.15){$e$}
\rput(4,2){$\bullet$}
\put(4.1,1.9){$g$}
\rput(2.5,0.5){$\bullet$}
\put(2.4,0.2){$a$}
\rput(1.5,1.5){$\bullet$}
\put(1.25,1.5){$b$}
\put(2.55,0.85){$p$}
\put(1.8,1.6){$p$}
\end{pspicture}
\end{center}
\caption{}\label{Fig5}
\end{figure}
Therefore, we have
\[
S(f_i,f_j)= x_p z_b z_c - y_p z_e z_a= z_b ( x_p z_c - y_p z_g)- y_p (z_e z_a - z_g z_b)
\]
Again, $S(f_i,f_j)$ reduces to 0.

Now, we consider the case when both $f_i$ and $f_j$ are special linear relations. Say,
\[
f_i= x_p z_a - y_p z_b \quad \text{and} \quad f_j= x_q z_a - y_q z_c \quad \text{or} \quad f_j= x_p z_d - y_p z_e
\]
First assume that $f_j= x_q z_a - y_q z_c$. Let $d=b \vee c$, see Figure~\ref{Fig6}.
\begin{figure}[h]
\begin{center}
\psset{unit=1.2cm}
\begin{pspicture}(0,1)(6,3)
\psline(3,1)(2,2)
\psline(2,2)(3,3)
\psline(3,3)(4,2)
\psline(4,2)(3,1)
\rput(3,1){$\bullet$}
\put(2.9,0.65){$a$}
\rput(2,2){$\bullet$}
\put(1.75,2){$c$}
\rput(3,3){$\bullet$}
\put(2.9,3.15){$d$}
\rput(4,2){$\bullet$}
\put(4.1,1.95){$b$}
\put(2.32,2.6){$p$}
\put(3.54,2.6){$q$}
\put(2.28,1.35){$q$}
\put(3.58,1.42){$p$}
\end{pspicture}
\end{center}
\caption{}\label{Fig6}
\end{figure}

Then $S(f_i,f_j)= x_p y_q z_c - x_q y_p z_b = y_q(x_p z_c - y_p z_d) - y_p (x_q z_b - y_q z_d)$. Therefore, $S(f_i,f_j)$ reduces to 0.

Now, take $f_j= x_p z_d - y_p z_e$. We can assume that $b>e$. Take $a=(i,j)$, $b=(i+1,j)$, $d=(i,l)$ and $e=(i+1,l)$ where $j>l$, see the Figure~\ref{Fig7}.

\begin{figure}[h]
\begin{center}
\psset{unit=1.2cm}
\begin{pspicture}(0,1)(6,4)
\psline(3,1)(1.5,2.5)
\psline(1.5,2.5)(2.5,3.5)
\psline(2.5,3.5)(4,2)
\psline(4,2)(3,1)
\rput(3,1){$\bullet$}
\put(2.9,0.63){$d$}
\rput(1.5,2.5){$\bullet$}
\put(1.2,2.45){$a$}
\rput(2.5,3.5){$\bullet$}
\put(2.4,3.6){$b$}
\rput(4,2){$\bullet$}
\put(4.1,1.9){$e$}
\put(2.08,2.85){$p$}
\put(3.27,1.59){$p$}
\end{pspicture}
\end{center}
\caption{}\label{Fig7}
\end{figure}

Then $\{a,e\}$ is the complementary set in $[d,b]$ and we have
\[
S(f_i,f_j)= y_p z_a z_e - y_p z_b z_d = -y_p ( z_b z_d - z_a z_e )
\]
Hence $S(f_i,f_j)$ reduces to 0. This complete the proof.

(b)$\Rightarrow$(a): Since $<^1_{lex}$ is an elimination order for the variables $x_i$ and $y_j$, it follows that the Gr\"obner basis of $J_L \cap T$ with respect to the rank lexicographic order consists elements of the Gr\"obner basis of $J_L$ with respect to $<^1_{lex}$ which belong to $T$. By assumption (b) these relations are exactly the Hibi relations in $J_L$. Thus, the Gr\"obner basis with respect to the rank lexicographical order of the Hibi relation ideal of the Hibi ring $\mathcal{R}_K(L)$ (which is $J_L \cap T$), consists of Hibi relations. Therefore, from Theorem~\ref{defense}, we know that $L$ is a chain ladder without critical corners. Let $m$ and $n$ be the non-negative integers such that $L$ has an embedding in $[m]_0 \times [n]_0$ and $(m,n)$ is the maximal element in $L$. Then it is enough to show that $L$ has no upper or lower corners because then $L \iso [m]_0 \times[n]_0$.

Suppose $L$ has upper or lower corners. Let $C$ be the maximal chain of upper and lower corners in $L$ with maximal element $a$. Let $a=(i,j)$ and $b=(m,n)$. Then $[a,b]$ is complemented in $L$. Take $\{c,d\}$ be the complementary set of $[a,b]$. We can assume that $a$ is an upper corner in $L$, i.e., $(i-1,j+1)\notin L$. Then, the elements $e=(i-1,j)$, $g=(i,j-1)$, $f=(i-1,j-1)$, and $c=(i,j+1)$ belong to $L$. Consider the $S$-polynomial of the binomials $f_i=z_a z_f - z_e z_g$ and $f_j = z_a x_p - z_c y_p$ in $J_L$, where $c= a \cup \{p\}$. Then $S(f_i,f_j)= x_p z_e z_g - z_f z_c y_p$ reduces to $0$ if and only if $x_p z_e - y_p z_h \in J_L$, where $h=(i-1, j+1)$. This implies $(i-1,j+1) \in L$, a contradiction to our assumption.
\end{proof}

In the following we extend the previous result to meet-distributive meet-semilattices. Recall that a poset $L$ is called a {\em meet-semilattice} if every pair of elements of $L$ has a meet in $L$. A finite meet-semilattice $L$ is called {\em meet-distributive} if each interval $[x, y]$ of
$L$ such that $x$ is the meet of the lower neighbors of $y$  in this interval is Boolean. Let $P$ be the set of join irreducible elements in $L$. For any $l \in L$, we call the cardinality of $\{p \in P| p \leq l\}$ the {\em degree} of $l$, and the maximum of the lengths of chains descending from $l$ the {\em rank} of $l$. $L$ is called {\em graded} if all maximal chains have the same length. In \cite{Ede}, the following characterization of meet-distributive meet-semilattices is given.

\begin{Lemma}\label{meet-dist}
For a finite lattice L the following conditions are equivalent:
\begin{enumerate}

\item[{\em (a)}] $L$ is meet-distributive.
\item[{\em (b)}] $L$ is graded and $\deg l = \rank l$, for all $l \in L$.
\item[{\em (c)}] Each element in $L$ is a unique minimal join of join-irreducible elements.
\end{enumerate}
\end{Lemma}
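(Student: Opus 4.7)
The plan is to establish the three equivalences cyclically via (a) $\Rightarrow$ (b) $\Rightarrow$ (c) $\Rightarrow$ (a), which is the route used in Edelman's original treatment in \cite{Ede}.

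For (a) $\Rightarrow$ (b), I would induct on $|L|$. Pick a maximal element $y$, let $N(y)$ denote its lower neighbors, and set $x = \bigwedge N(y)$. By meet-distributivity the interval $[x,y]$ is Boolean, hence graded of length $|N(y)|$. The inductive hypothesis applied to $L \setminus \{y\}$ supplies gradedness and $\deg = \rank$ for every element below $y$; passing to $y$ itself, one checks that the $|N(y)|$ join-irreducibles newly dominated (the atoms of the Boolean interval $[x,y]$) are exactly what is needed to guarantee $\deg y - \deg x = \rank y - \rank x = |N(y)|$, matching the rank jump and preserving gradedness.

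For (b) $\Rightarrow$ (c), any $l \in L$ admits at least one join representation by join-irreducibles $\leq l$, for instance the join of all such. Choose one $l = \bigvee_{p \in S} p$ with $|S|$ minimum. Minimality forces the elements of $S$ to be pairwise incomparable and each of them to strictly lower the join upon removal, so $|S| \leq \deg l$ trivially and $|S| \geq \rank l$ by iterating this rank drop along a maximal chain. Since $\rank l = \deg l$, equality holds and $S$ is forced to coincide with the set of maximal join-irreducibles $\leq l$, a set determined by $l$ alone, which gives uniqueness.

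For (c) $\Rightarrow$ (a), fix $y \in L$, put $x = \bigwedge N(y)$, and let $J$ be the set of join-irreducibles appearing in the unique minimal join representation of $y$ but not lying below $x$. I would define $\psi : 2^J \to [x,y]$ by $\psi(A) = x \vee \bigvee_{p \in A} p$ and show it is a lattice isomorphism onto $[x,y]$. The main obstacle, and where I expect the bulk of the work, is verifying injectivity of $\psi$: if $\psi(A) = \psi(A')$, condition (c) should be applied to the common value to conclude $A = A'$, but one first has to identify precisely which elements of $J$ genuinely contribute to the unique minimal representation after being joined with $x$ (some might be absorbed). Once injectivity is settled, surjectivity follows on counting grounds, since each lower neighbor of $y$ corresponds to omitting a single element of $J$, so $|J| = |N(y)|$ and $|[x,y]| = 2^{|N(y)|}$; the lattice homomorphism properties of $\psi$ are then immediate from its definition.
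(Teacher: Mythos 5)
The paper does not actually prove Lemma~\ref{meet-dist}; it is quoted from Edelman \cite{Ede} without proof, so your proposal can only be judged on its own merits, and on those merits each implication has a genuine gap. The fatal one is (b) $\Rightarrow$ (c). The inequality ``$|S|\geq \rank l$'' is backwards: adjoining the elements of a minimal representing set $S$ one at a time yields a strictly increasing chain of length $|S|$ ending at $l$, so one only gets $|S|\leq \rank l$ (already in the three-element chain the top $b$ is join-irreducible, so $|S|=1$ while $\rank b=\deg b=2$), and together with the trivial $|S|\leq\deg l$ this forces nothing. Worse, the conclusion you extract --- that $S$ must be the set of maximal join-irreducibles below $l$ --- is false for meet-distributive lattices that are not distributive. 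Take $L$ to be the lattice of convex subsets of three collinear points $1,2,3$: it is meet-distributive, graded, with $\deg=\rank$, its join-irreducibles are the three atoms $\{1\},\{2\},\{3\}$ (pairwise incomparable, hence all maximal below the top), yet the unique minimal representation of the top is $\{1\}\vee\{3\}$. A correct proof of (b) $\Rightarrow$ (c) must first extract from gradedness and $\deg=\rank$ the fact that every cover relation picks up exactly one new join-irreducible, and then derive uniqueness from that; your sketch never isolates this step.

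The other two legs are also incomplete. In (a) $\Rightarrow$ (b) you identify the join-irreducibles newly dominated by $y$ with the atoms of the Boolean interval $[x,y]$; these are different sets in general. In $L=[2]_0\times[1]_0$ with $y=(2,1)$ one has $x=(1,0)$, the atom $(1,1)$ of $[x,y]$ is not join-irreducible, and the new join-irreducible $(0,1)$ does not even lie in $[x,y]$. What you actually need --- both for the degree count and for gradedness, which requires $\deg y-\deg w=1$ for \emph{each} lower neighbor $w$ of $y$, not just an aggregate count over the interval --- is that these two sets have the same cardinality, and that requires an argument you do not supply. In (c) $\Rightarrow$ (a) you concede that injectivity of $\psi$ is open, and the surjectivity argument is circular: $|[x,y]|=2^{|N(y)|}$ is precisely the assertion that $[x,y]$ is Boolean, i.e.\ the statement being proved. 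The overall architecture (cyclic implications, induction on $|L|$, analysis of the interval below the meet of the lower neighbors) is reasonable and in the spirit of Edelman's treatment, but each implication is missing its key step.
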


The above lemma shows that a distributive lattice is also a meet-distributive meet-semilattice.

Let $L$ be a meet-distributive meet-semilattice and $P$ be the poset consisting of all the join-irreducible elements in $L$. We denote by $\hat{L}$ the ideal lattice of $P$ and call it {\em associated distributive lattice} of $L$. We have a canonical embedding of $L$ in $\hat{L}$ given by $l \mapsto \{p \in P| p \leq l\}$ for all $l \in L$.

\begin{Proposition}
Let $L$ be a meet-distributive meet-semilattice and $\hat{L}$ be its associated distributive lattice. Then $L$ is a poset ideal of $\hat{L}$.
\end{Proposition}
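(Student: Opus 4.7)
The plan is to show that the image of the canonical embedding $\varphi\:\; L \hookrightarrow \hat{L}$, $l \mapsto [l] := \{p \in P : p \leq l\}$, is downward closed in $\hat{L}$. Fix $b \in L$ and $a \in \hat{L}$ with $a \subseteq [b]$; the task is to produce $l \in L$ with $[l] = a$.

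My candidate is $l := \bigvee a$, the join taken inside $L$. Since $b$ is a common upper bound in $L$ for the elements of $a \subseteq P$, and in a finite meet-semilattice every nonempty bounded subset admits a join (the meet of its upper bounds), this join exists. Clearly $l \leq b$ and $a \subseteq [l]$. The essential step is then the reverse inclusion $[l] \subseteq a$.

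To establish this, I would invoke Lemma~\ref{meet-dist}(c): there is a unique minimal $M_l \subseteq P$ with $\bigvee M_l = l$. Extracting a minimal subset $S \subseteq a$ with $\bigvee S = l$ yields a minimal subset of $P$ with join $l$, so by uniqueness $S = M_l$, giving $M_l \subseteq a$. Because $a$ is a poset ideal of $P$, the downward closure of $M_l$ in $P$ is contained in $a$. It therefore suffices to show that this downward closure of $M_l$ in $P$ equals $[l]$, i.e.\ that every join-irreducible $q \leq l$ is dominated in $P$ by some element of $M_l$.

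My plan for this final step is induction on $\rank l$, which equals $|[l]|$ by Lemma~\ref{meet-dist}(b). In the inductive step I would exploit the Boolean interval $[x, l]$ at the top of $l$, where $x$ is the meet of the lower neighbors of $l$ in $L$, provided by the meet-distributive property: the top layer $[l]\setminus[x]$ admits a concrete description in terms of the distinguished join-irreducibles removed by the lower neighbors $l_1,\ldots,l_k$ of $l$, while $[x]$ has strictly smaller rank. Applying the inductive hypothesis to $x$ (or to a suitable element of $[x,l]$) should pin down how the elements of $[x]$ are dominated by $M_l$, while the layer $[l]\setminus[x]$ is handled directly through the Boolean structure. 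The main obstacle I anticipate is precisely this last inductive step: tying together the minimal join representation $M_l$ with the Boolean top interval in order to conclude that the downward closure of $M_l$ in $P$ saturates all of $[l]$.
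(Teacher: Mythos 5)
Your reduction is sound up to the point you flag as the main obstacle, but that obstacle is fatal: the claim that every join-irreducible $q\leq l$ is dominated in $P$ by an element of $M_l$ is false, so no induction will establish it. Take $L$ to be the lattice of intervals of the chain $1<2<3$, that is, $L=\{\emptyset,\{1\},\{2\},\{3\},\{1,2\},\{2,3\},\{1,2,3\}\}$ ordered by inclusion ($B_3$ with the coatom $\{1,3\}$ deleted). This is a graded lattice with $\deg l=\rank l$ for every $l$, hence meet-distributive by Lemma~\ref{meet-dist}; its join-irreducible elements are the three pairwise incomparable atoms, so $P$ is a three-element antichain and $\hat L\iso B_3$. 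For $b=\{1,2,3\}$ and the poset ideal $a=\{\,\{1\},\{3\}\,\}\subseteq [b]$ of $P$ you get $l=\{1\}\vee_L\{3\}=\{1,2,3\}$ (the unique common upper bound in $L$) and $M_l=\{\,\{1\},\{3\}\,\}=a$, exactly as your argument predicts, yet $\{2\}\leq l$ while $\{2\}$ lies below no element of $M_l$. So $[l]\not\subseteq a$, the downward closure of $M_l$ does not saturate $[l]$, and $\bigvee_L a$ does not represent $a$.

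Worse, this example appears to refute the Proposition itself under the paper's definitions: the image of $L$ in $\hat L=B_3$ is $B_3\setminus\{\{1,3\}\}$, which is not downward closed since $\{1,3\}<\{1,2,3\}$. The paper's own argument breaks at the sentence ``any maximal chain descending from $r$ in $\hat L$ also survives in $L$'': the equality $\rank_L r=\rank_{\hat L}r$ shows only that \emph{some} maximal chain of $\hat L$ below $r$ lies in the image of $L$ (namely the image of a maximal chain of $L$), not all of them --- here $\emptyset<\{1\}<\{1,3\}<\{1,2,3\}$ does not survive. So neither your route nor the paper's can be completed as written; the statement needs an additional hypothesis, and since Corollary~\ref{meet} only invokes it when $\hat L$ is URC, any repair should be aimed at (and verified in) that setting.
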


\begin{proof}
Take $s \in \hat{L}$ and $r \in L$ such that $s \leq r$. From Lemma~\ref{meet-dist}, we have $\rank_L r = \deg_L r$. Also, we have $\deg_L r=\deg_{\hat{L}} r=\rank_{\hat{L}} r$, which gives and $\rank_L r = \rank_{\hat{L}} r$. It shows any maximal chain descending from $r$ in $\hat{L}$ also survives in $L$. Hence, we obtain $s \in L$.
\end{proof}

 We denote by $H_L$ the ideal of $S$ generated by  monomials $u_a$ with $a\in L$ as described in (\ref{monomial}). Let $\mathcal{R}(H_L)$ denote the Rees algebra of $H_L$ and $J_L$ be the defining ideal of $\mathcal{R}(H_L)$. We have $H_L \subset H_{\hat{L}}$ and $\mathcal{R}(H_L) \subset \mathcal{R}(H_{\hat{L}})$.

\begin{Corollary} \label{meet}
Let $L$ be a meet-distributive meet-semilattice. Suppose that the associated distributive lattice $\hat{L}$ of $L$ is a URC lattice. Then the following conditions are equivalent:

\begin{enumerate}
\item[{\em (a)}] $L=\hat{L}$.
\item[{\em (b)}] The reduced Gr\"obner basis of $J_L$ with respect to $<^{1}_{lex}$ consists of Hibi relations and special linear relations.
\end{enumerate}

\end{Corollary}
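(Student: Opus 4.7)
The direction (a)$\Rightarrow$(b) is immediate: if $L = \hat{L}$, then $L$ is itself a URC distributive lattice, so Theorem~\ref{Rees} gives the conclusion.

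For (b)$\Rightarrow$(a) I will argue the contrapositive. By Proposition~\ref{URC}, the URC lattice $\hat{L}$ is isomorphic to $[m]_0\times [n]_0$. When $m = 0$ or $n = 0$, $\hat{L}$ is a chain and $L = \hat{L}$ follows automatically: the embedding $L \hookrightarrow \hat{L}$ of the preceding proposition sends the minimum of $L$ to $\emptyset$ and each join-irreducible $p \in P \subset L$ to its principal ideal, and these together already exhaust a chain. So assume $m, n \geq 1$ and, for contradiction, $L \subsetneq \hat{L}$. Using again the preceding proposition, $L$ sits inside $\hat{L}$ as a poset ideal, so $\hat{L}\setminus L$ is upward closed; pick a minimal element $c \in \hat{L}\setminus L$. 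Every element on the two coordinate axes of $[m]_0\times [n]_0$ is a join-irreducible of $\hat{L}$ and therefore lies in $L$; hence $c = (i,j)$ with $i, j \geq 1$, and by minimality the two lower neighbors $a_1 = (i-1,j)$ and $a_2 = (i,j-1)$ of $c$ in $\hat{L}$ both belong to $L$.

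The core step is to produce an element of $J_L$ whose leading term defies reduction by the proposed basis. Writing $c_i, d_j \in P$ for the elements with $c = a_1 \cup \{c_i\} = a_2 \cup \{d_j\}$, the monomial identities $x_{c_i} u_{a_1} = y_{c_i} u_c$ and $x_{d_j} u_{a_2} = y_{d_j} u_c$ in $S$ let me eliminate $u_c$ to obtain
\[
f \;=\; x_{c_i}\, y_{d_j}\, z_{a_1} \;-\; x_{d_j}\, y_{c_i}\, z_{a_2} \;\in\; J_L.
\]
Each monomial of $f$ carries exactly one $x$, one $y$, and one $z$, so no Hibi relation (whose leading term is a pure product $z_az_b$) can divide either one. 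A special linear relation whose initial term divides a monomial of $f$ must have the form $x_p z_{a_k} - y_p z_{a_k \cup \{p\}}$ with $p \in \{c_i, d_j\}$ and $a_k \in \{a_1, a_2\}$ chosen compatibly, but in each of the only two relevant cases $a_k \cup \{p\} = c \notin L$, so no such special linear relation belongs to $J_L$. Thus neither monomial of $f$ reduces modulo the proposed basis, making the nonzero $f$ its own normal form and contradicting (b).

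The main obstacle is the setup work at the start: establishing, via the preceding proposition and the definition of $P$, that $L$ already contains the full coordinate axes of $[m]_0\times [n]_0$ so that the minimal $c\in\hat{L}\setminus L$ is forced into the interior $i,j \geq 1$ and automatically has two lower neighbors in $L$. Once this is in hand, the construction of $f$ and the leading term analysis closely mirror the argument used at the end of the proof of Theorem~\ref{Rees}.
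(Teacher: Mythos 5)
Your proposal is correct and follows essentially the same route as the paper: in both arguments one locates an element $d\in\hat L\setminus L$ covered by two incomparable elements $a,b\in L$ (using that $L$ is a poset ideal of the grid $\hat L\iso [m]_0\times[n]_0$) and then shows that the binomial $x_py_qz_b-x_qy_pz_a\in J_L$ cannot be reduced because the only candidate special linear relation would involve $z_d$ with $d\notin L$. The only cosmetic differences are that you build this binomial directly from the monomial identities rather than as the $S$-polynomial of the two special linear relations $x_pz_c-y_pz_a$ and $x_qz_c-y_qz_b$, and that you check both monomials instead of just the leading one.
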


\begin{proof}
(a) $\Rightarrow$ (b) follows from Theorem~\ref{Rees}.

(b) $\Rightarrow$ (a): Assume $L \subsetneq \hat{L} $. Since $L$ is a poset ideal of $\hat{L}$ and $\hat{L} \iso [m]_0 \times [n]_0 $, there exist two incomparable elements $a, b \in L$ such that they cover $c=a \wedge b$ and $d=a \vee b \notin L$. Let $a=c\cup \{p\}$ and $b=c\cup \{q\}$ with $p,q \in P$. Then $f_i=x_pz_c-y_pz_a$ and $f_j=x_qz_c-y_qz_b$ are special linear relations in $J_L$, and
\[
S(f_i,f_j)=x_py_qz_b-x_qy_pz_a
\]
with the initial monomial $x_py_qz_b$ if $x_p > x_q$, as we may assume. Our assumption $(b)$ implies that the initial monomial of some Hibi relation or special linear relation must divide $x_py_qz_b$. It follows that the only special linear relation whose initial term divides $x_py_qz_b$ is $x_p z_b - y_p z_d$. Since $d \notin L$, we arrive at a contradiction.
\end{proof}

\end{document}